\newcommand{\edit}[1]{\marginpar{\footnotesize{#1}}}
\newcommand{\Spec}{\operatorname{Spec}}
\newcommand{\Pic}{{\rm Pic\,}}
\newcommand{\Nil}{{\rm Nil\,}}
\newcommand{\End}{{\rm End}}
\newcommand{\cK}{{\mathcal K}}
\newcommand{\cI}{{\mathcal I}}
\newcommand{\cNI}{{\mathcal{NI}}}
\newcommand{\cO}{{\mathcal O}}
\newcommand{\A}{{\mathbb A}}
\newcommand{\N}{{\mathbb N}}
\newcommand{\Z}{{\mathbb Z}}
\newcommand{\cc}{{\mathfrak c}} 
\def\oo{\otimes}
\def\map#1{{\;\buildrel #1 \over \longrightarrow}\;}
\def\smap#1{{\;\buildrel #1 \over \rightarrow}\;}
\newcommand{\mathdot}{{\mathbf{\scriptscriptstyle\bullet}}}
\newcommand\cof{\rightarrowtail}
\newcommand\bP{{\bf P}}
\newcommand\bNil{{\bf Nil}}
\newcommand\bEnd{{\bf End}}
\newcommand{\et}{{\mathrm{et}}}
\newcommand{\nis}{{\mathrm{nis}}}
\newcommand{\zar}{{\mathrm{zar}}}
\newcommand{\cdh}{{\mathrm{cdh}}}
\theoremstyle{plain}
\newtheorem{theorem}[equation]{Theorem}
\newtheorem{corollary}[equation]{Corollary}
\newtheorem{lemma}[equation]{Lemma}
\newtheorem{substuff}{\bf Remark}[equation]
\newtheorem{proposition}[equation]{Proposition}
\theoremstyle{remark}
\newtheorem{remark}[equation]{Remark}
\newtheorem*{remm}{Remark}
\newtheorem{subrem}[substuff]{Remark} 
\theoremstyle{definition}
 \newtheorem{example}[equation]{Example}
 \newtheorem{fact}{Fact}
 \newtheorem{discussion}{Discussion}
\numberwithin{equation}{section}
\begin{document}
\title{Relative Cartier divisors and K-theory}
\date{\today}

 \author{Vivek Sadhu and Charles Weibel}
 \address{School of Mathematics, Tata Institute of Fundamental Research, 
1 Homi Bhaba Road, Colaba, Mumbai 400005, India} 
\email{sadhu@math.tifr.res.in, viveksadhu@gmail.com}
\thanks{Sadhu was supported by TIFR, Mumbai Postdoctoral Fellowship.}
 \address{Math. Dept., Rutgers University, New Brunswick, NJ 08901, USA}
 \email{weibel@math.rutgers.edu}

\begin{abstract}
We study the relative Picard group $\Pic(f)$ of a map $f:X\to S$ of schemes.
If $f$ is faithful affine, it is the relative Cartier divisor group $\cI(f)$.
The relative group $K_0(f)$ has a $\gamma$-filtration, and $\Pic(f)$ is
the top quotient for the $\gamma$-filtration. When $f$ is induced by a
ring homomorphism $A\to B$, we show that the relative ``nil'' groups 
$N\Pic(f)$ and $NK_n(f)$ are continuous $W(A)$-modules.  
\end{abstract}
%

 \maketitle
 
 \section*{Introduction}
If $f:X\to S$ is a morphism of schemes, the relative Picard group
$\Pic(f)$ was defined by Bass in \cite{Bass-Tata}, and
fits into a natural exact sequence
\begin{equation}\label{seq:Pic}
\cO^\times(S)\map{f^*} \cO^\times(X)\map{\partial} \Pic(f) 
\map{} \Pic(S) \map{f^*} \Pic(X).
\end{equation}
The goal of this paper is to study this group as well as $N\Pic(f)$,
defined to be $\Pic(f[t])/\Pic(f)$, where 
$f[t]:X\times\A^1\to S\times\A^1$.

Our first observation is that when $f$ is $\Spec(B)\to\Spec(A)$ for a 
commutative ring extension $A\hookrightarrow B$, $\Pic(f)$
is isomorphic to the relative Cartier divisor group $\cI(f)$,
defined in \cite{rs} as the group of invertible $A$-submodules
of $B$ under multiplication and studied in \cite{ss, vs, SW}.
This definition of $\cI(f)$ also makes sense (and we still have
$\cI(f)\cong\Pic(f)$) for scheme maps $f:X\to S$ for which 
$\cO_S^\times\to f_*\cO_X^\times$ is an injection of sheaves.
It then follows from \cite{SW} that $\Pic(f)$ is a contracted
functor in the sense of Bass.

We then relate $\Pic(f)$ to the relative group $K_0(f)$, which
fits into an exact sequence
\[
K_1(S) \map{f^*} K_1(X) \map{\partial} K_0(f) \map{} K_0(S) \map{} K_0(X).
\]
For example, if $f: A\hookrightarrow B$ is subintegral
then $K_0(f)\cong \Pic(f)$ (Proposition \ref{subint}).

Let $\cNI$ denote the Zariski sheaf associated to the
presheaf $U\mapsto N\Pic(U,f^{-1}U)$ on $S$. 
In Theorem \ref{H^0} and Theorem \ref{quasi}, we prove the following:

\begin{theorem}
Let $f: X\to S$ be a faithful affine morphism of schemes. 
\begin{enumerate}
\item The Zariski sheaf $\cNI$ is an \'etale sheaf on $S$. Moreover, 
\[
N\Pic(f) \cong H_\et^{0}(S,\cNI)= H_\zar^0(S,\cNI).
\]
 \item If $X$ and $S$ are schemes then 
    $H_\et^*(S,\cNI)\cong H_\zar^*(S,\cNI).$
 \item If $X$ and $S$ are both affine schemes then
$H_\et^q(S,\cNI)=0$ for $q\ne0$.
\end{enumerate}
 \end{theorem}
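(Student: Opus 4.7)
The plan is to give an explicit sheaf-theoretic description of $\cNI$ in terms of the étale sheaf $\cO^\times = \mathbb{G}_m$, and then to deduce all three parts by exploiting standard properties of this sheaf. Under the faithful affine hypothesis, the identification $\cI(f) = \Pic(f)$ implies that the presheaf $U \mapsto \cI(U, f^{-1}U)$ on $S$ is already the Zariski sheaf $\cI_S := f_*\cO_X^\times/\cO_S^\times$. Applying this construction to $f[t]\colon X\times\A^1 \to S\times\A^1$ and pushing forward along $p\colon S[t] \to S$ yields a Zariski sheaf $p_*\cI_{S[t]}$ together with a split surjection onto $\cI_S$ (via $t=0$). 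The kernel is $\cNI$, which is therefore already a sheaf, giving $H^0_\zar(S,\cNI) = N\Pic(f)$ tautologically. For part (1), since $\mathbb{G}_m$ is an étale sheaf and the affine pushforwards $f_*$, $f[t]_*$, and $p_*$ each preserve this property, every quotient and kernel in the construction remains an étale sheaf. Hence $\cNI$ is an étale sheaf, and $H^0_\et(S,\cNI) = H^0_\zar(S,\cNI)$ follows since global sections of a sheaf over the scheme $S$ do not depend on the topology.

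For part (3), with $S=\Spec A$ and $X=\Spec B$ both affine, the crucial step is to prove that $\cNI$ is the quasi-coherent sheaf associated to the $A$-module $N\cI(A,B)$ — equivalently, that the natural map $N\cI(A,B)_a \to N\cI(A_a, B_a)$ is an isomorphism for every $a\in A$. The $A$-module (indeed $W(A)$-module) structure on $N\cI(A,B)$ highlighted in the abstract is the key input: elements of $N\cI(A,B)$ are unipotent expressions of the form $1 + t\cdot(\text{nilpotent})$, whose additive structure comes from the nilpotent part and is visibly compatible with localization. Once quasi-coherence is established, Serre's vanishing gives $H^q_\zar(S,\cNI)=0$ for $q>0$, and the classical comparison of étale and Zariski cohomology for quasi-coherent sheaves on affines upgrades this to the étale statement.

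For part (2), I would cover a general $S$ by affine opens $U$; since $f$ is affine, each $f^{-1}U$ is also affine, so part (3) applies to the pair $(U, f^{-1}U)$ and yields vanishing of both $H^q_\et(U,\cNI)$ and $H^q_\zar(U,\cNI)$ for $q>0$. This gives $R^q\pi_*\cNI = 0$ for the change-of-topology map $\pi\colon S_\et \to S_\zar$, so the Leray spectral sequence collapses to yield $H^*_\et(S,\cNI) \cong H^*_\zar(S,\cNI)$.

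The main obstacle I anticipate is the quasi-coherence claim at the start of part (3): the naïve description of $N\cI$ via invertible $A[t]$-submodules of $B[t]$ does not obviously commute with localization $A \to A_a$. Establishing this compatibility is the key technical input, and presumably rests on the explicit $W(A)$-module structure on $N\cI(A,B)$ developed earlier in the paper.
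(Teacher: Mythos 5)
Your part (1) follows the paper's own route (identify $\cI(f)$ with the sheaf $f_*\cO_X^\times/\cO_S^\times$, push forward from $\A^1_S$, split off $\cNI$ using $t=0$, and use that pushforward preserves \'etale sheaves), and your part (2) is the paper's Corollary \ref{et=zar} verbatim (vanishing of $R^q\tau_*\cNI$ plus Leray). The problem is part (3), on which part (2) depends.

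The quasi-coherence claim at the heart of your part (3) is not just the hard step --- it is false in general. $N\Pic(f)$ carries no natural $A$-module structure: the paper's Theorems \ref{NI} and \ref{ctn-W} make it a continuous $W(A)$-module, which forces it to be a $p$-group when $\mathrm{char}(A)=p$ and yields an $A$-module structure only when $\Qb\subseteq A$. Your heuristic that elements are ``$1+t\cdot(\text{nilpotent})$ with additive structure coming from the nilpotent part'' is misleading for the same reason: the group law on such unipotent elements is multiplicative (Witt-vector-like), not $A$-linear. So there is no $A$-module $M$ with $\cNI=\widetilde{M}$, and Serre vanishing plus the quasi-coherent \'etale/Zariski comparison cannot be invoked. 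The paper circumvents quasi-coherence entirely: it proves directly that the augmented \v Cech complex $C^\mathdot(\{U_i\},N\Pic(f))$ is exact for covers by basic opens (Lemma \ref{cech}), and then applies the Cartan criterion to identify \v Cech with derived-functor cohomology. The localization input is the weaker colimit statement $N\Pic(A_s)\cong N\Pic(A)_{[s]}$ (from \cite{wei}), which by Vorst's lemma \cite[1.2]{Vorst} already forces exactness of the \v Cech complex without any module structure; the relative case is then reduced to the absolute one by passing to the seminormalization ${}^+\!A$ (killing $N\Pic({}^+\!A,B)$ by seminormality) and using Ischebeck's surjectivity of $N\Pic(A)\to N\Pic(B)$ in the subintegral case, together with the two short exact sequences of \v Cech complexes involving $NU(B_s)/NU(A_s)$. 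To repair your argument you would need to replace ``quasi-coherent'' by ``satisfies Vorst's condition \eqref{NPIC}'' and supply this reduction; as written, the argument does not go through outside characteristic zero.
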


 A secondary goal of this article is to study the relative
$K$-theory groups $K_{n}(f)$ associated to a morphism of schemes 
$f: X\to S$. By definition, $K_n(f)= \pi_nK(f)$, where $K(f)$ is 
the homotopy fiber of  $K(S)\to K(X).$ Comparing $X\to S$ to
$X[t]\to S[t]$ yields groups $NK_*(f)$.

\begin{theorem}
For each homomorphism $f:A\to B$:
 \begin{enumerate}
  \item $NK_n(f)$ is a continuous $W(A)$-module, for all $n$.
  \item $N\Pic(f)$ is a continuous $W(A)$-module.
  \item $\det: NK_0(f)\to N\Pic(f)$ is a $W(A)$-module homomorphism.
 \end{enumerate}
\end{theorem}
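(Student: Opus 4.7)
The plan is to propagate the $W(A)$-module structure from the absolute nil groups $NK_n(A)$, $NK_n(B)$, $N\Pic(A)$, $N\Pic(B)$ to the relative groups $NK_n(f)$ and $N\Pic(f)$, using the naturality of the Frobenius and Verschiebung operators that generate the $W$-action.

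For (1), I would invoke Stienstra's theorem that $NK_n(R)$ is a continuous $W(R)$-module for every commutative ring $R$, the action being built from operators $F_m$ (induced by $t\mapsto t^m$) and $V_m$ (a transfer construction) that are natural in $R$. Applied to $A\to B$, this makes $NK_n(A)\to NK_n(B)$ into a $W(A)$-linear map, where $NK_n(B)$ is viewed as a $W(A)$-module via $W(A)\to W(B)$. The long exact fiber sequence
\[
\cdots \to NK_{n+1}(B) \to NK_n(f) \to NK_n(A) \to NK_n(B) \to \cdots
\]
then induces a $W(A)$-module structure on $NK_n(f)$, with continuity inherited from the outer terms.

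For (2), I would apply (\ref{seq:Pic}) to $f[t]:A[t]\to B[t]$, divide out by the split $t=0$ fiber, and obtain a natural exact sequence
\[
NA[t]^\times \to NB[t]^\times \to N\Pic(f) \to N\Pic(A) \to N\Pic(B).
\]
The outer terms $NR[t]^\times\cong(1+t\,\Nil(R)[t])^\times$ and $N\Pic(R)$ are continuous $W(R)$-modules (the former via the classical identification with curves in the big Witt ring, the latter by \cite{ss,vs,SW}), and every map is $W(A)$-linear by naturality in $A\to B$. Thus $N\Pic(f)$ inherits a $W(A)$-module structure. For (3), the determinant $K_0\to\Pic$ is a natural transformation of functors of rings, hence commutes with $F_m$ and $V_m$ on the absolute groups; passage to the homotopy fiber on the $K$-side and to the relative Picard group via (\ref{seq:Pic}) on the other then forces $\det:NK_0(f)\to N\Pic(f)$ to be $W(A)$-linear.

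The main obstacle will be showing that the $W(A)$-module structures on the relative groups are intrinsic, rather than merely assembled from the outer terms of the defining exact sequences. The cleanest resolution is to lift $F_m$ and $V_m$ to self-maps of the spectrum $NK(f)$ and of the two-term complex computing $N\Pic(f)$; the compatibility with $\det$ needed for (3) then holds at the level of these defining constructions. Continuity, once the module structure is fixed, reduces to verifying that $V_m$ acts by zero on any given element for $m\gg 0$, which propagates through the exact sequences from the outer terms.
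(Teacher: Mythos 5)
Your proposal assembles the right ingredients but founders on exactly the point that you yourself flag and then defer: a $W(A)$-module structure on the middle term of an exact sequence is \emph{not} induced by continuous $W(A)$-module structures on the outer terms. Operators $(1-at^m)*$ that are compatible with the maps of the sequence are neither uniquely determined on the relative term nor forced to satisfy the Witt-ring relations there, since an element of $NK_n(f)$ mapping to $0$ in $NK_n(A)$ lifts only non-canonically from $NK_{n+1}(B)$. The paper states this obstacle explicitly (``A priori, though, the maps $(1-at^{m})*$ do not fit together to make $NK_{0}(f)$ into a $W(A)$-module''), and your proposed ``cleanest resolution'' --- lifting $F_m$ and $V_m$ to self-maps of the spectrum $NK(f)$ --- is the correct idea in spirit but is left as a one-sentence plan; carrying it out is the entire content of the proof, and nothing in your write-up actually does it.

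For comparison, here is how the paper closes that gap. For $NK_0(f)$ (Proposition \ref{ctn-W}) it sidesteps the relative term entirely by excision: writing $B=\Z[X]/I$ and $R=A\times_B\Z[X]$, one gets $NK_0(f)\cong NK_0(\tilde f)\cong NK_0(R)$, an honest \emph{absolute} continuous $W(R)$-module, and then checks that $W(I)$ acts trivially so the action factors through $W(A)=W(R)/W(I)$. For general $n$ (Theorem \ref{NK}) it identifies $NK_{n+1}(f)$ with $\Nil_n(f)$ via Proposition \ref{Nil=NK} and constructs a genuine pairing of Waldhausen categories $\bEnd(A)\times S.\bNil(f)\to S.\bNil(f)$; continuity then rests on the relative projection formula $(V_n\alpha)*\beta=V_n(\alpha*F_n(\beta))$ (Lemma \ref{projection}), whose proof needs Stienstra's identity $[R^n,\omega]=[R^n,\omega']$ in $K_0\mathbf{E}$ --- none of which is visible from the exact-sequence picture. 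Finally, for (2) and (3), your claim that $\det$ commutes with $V_m$ on the nose is not correct as stated: the determinant of a transfer $i_*$ is not simply the norm of the determinant (there is a twist by $\det(i_*S)$). The paper proves only the weaker, sufficient statement that $F=i_*\sigma^*i^*$ preserves $NSK_0(f)$, using Corollary 3.2 of \cite{DW} to compute $\det(i_*(P\oo S))$; then $N\Pic(f)=NK_0(f)/NSK_0(f)$ inherits the module structure as a quotient and $\det$ is $W(A)$-linear by construction. You would need to supply these computations (or genuine spectrum-level lifts of $F_m$, $V_m$ together with verification of the Witt relations) to make your outline into a proof.
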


\noindent(See Theorems \ref{NI} and \ref{NK}, and Proposition \ref{ctn-W}).
This implies that if ${\rm char}(A)=p>0$ 
then both $NK_{n}(f)$ and $N\Pic(f)$ are $p$-groups, while if
${\rm char}(A)=0$ the groups have the structure of $A$-modules. 


We conclude with some remarks about $K_n(f)$ when $n$ is negative.
If $X$ and $S$ have dimension at most $d$, then $K_n(S)=K_n(X)=0$
for $n<-d$ in many cases.  In such cases, it follows that $K_n(f)=0$
for $n<-d-1$.
The cohomological interpretation of the negative $K$-theory of a scheme 
in terms of the $\cdh$-cohomology of the constant sheaf $\Z$ is given in
 \cite{CHSW}. In the relative situation, we prove the following
 (Theorem \ref{vanishing} and Theorem \ref{1dim}):

\begin{theorem}
Let $f:X\to S$ be a finite morphism of $d$-dimensional noetherian schemes.
\begin{enumerate}
\item If $X$ and $S$ are essentially of finite type over a
  field $k$ of characteristic $0$, $K_{-d-1}(f)\cong H_\cdh^{d}(S,f_{*}\Z/\Z)$.
\item 
   If $\dim S=1$, then $K_{-2}(f)\cong H_\nis^{1}(S,f_{*}\Z/\Z)$
and there is an extension
\[
0 \to H_\nis^1(S,f_*\cO_X^\times/\cO_S^\times) \to K_{-1}(f) \to 
H_\nis^0(S,f_*\Z/\Z) \to 0.
\]
 \end{enumerate}
\end{theorem}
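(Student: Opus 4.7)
\medskip
\noindent\textbf{Proof plan.} The common strategy is to run the long exact sequence in relative $K$-theory
\[
\cdots \to K_n(S) \to K_n(X) \to K_{n-1}(f) \to K_{n-1}(S) \to K_{n-1}(X) \to \cdots
\]
and feed in known descriptions of the negative $K$-groups of $X$ and $S$ themselves, converting everything in sight from $X$ to $S$ via the finiteness of $f$ (so that $f_*$ is exact on abelian sheaves on $S_\cdh$ and $S_\nis$, and $H^*(X,F)\cong H^*(S,f_*F)$).

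For part (1), I would invoke the main theorem of \cite{CHSW}: for any $d$-dimensional scheme $Y$ essentially of finite type over a field of characteristic zero, $K_n(Y)=0$ for $n<-d$ and $K_{-d}(Y)\cong H^d_\cdh(Y,\Z)$. Since $\dim X\le\dim S=d$, the relative long exact sequence collapses to
\[
H^d_\cdh(S,\Z)\to H^d_\cdh(X,\Z)\to K_{-d-1}(f)\to 0.
\]
Identifying $H^d_\cdh(X,\Z)\cong H^d_\cdh(S,f_*\Z)$ and using the cohomology sequence attached to $\Z\to f_*\Z\to f_*\Z/\Z$ on $S_\cdh$, together with the vanishing $H^{d+1}_\cdh(S,\Z)=0$ (cdh cohomological dimension is bounded by Krull dimension), gives $K_{-d-1}(f)\cong H^d_\cdh(S,f_*\Z/\Z)$.

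For part (2), I would substitute the classical $1$-dimensional input: for noetherian $Y$ with $\dim Y\le 1$, $K_{-n}(Y)=0$ for $n\ge 2$ and $K_{-1}(Y)\cong H^1_\nis(Y,\Z)$. The identification $K_{-2}(f)\cong H^1_\nis(S,f_*\Z/\Z)$ is then just the argument of part (1) with cdh replaced by nis and using $H^2_\nis(S,-)=0$. For the extension computing $K_{-1}(f)$, I would extract from the long exact sequence the short exact sequence
\[
0 \to \coker\bigl(K_0(S)\to K_0(X)\bigr) \to K_{-1}(f) \to \ker\bigl(K_{-1}(S)\to K_{-1}(X)\bigr) \to 0.
\]
The right-hand kernel is identified, via the coboundary of $\Z\to f_*\Z\to f_*\Z/\Z$, with (the image of) $H^0_\nis(S,f_*\Z/\Z)$ in $H^1_\nis(S,\Z)$. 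The left-hand cokernel is analyzed via the rank/determinant filtration of $K_0$: the rank piece is absorbed into the $H^0$ term, and the rank-zero piece (controlled by $\Pic$) identifies with $H^1_\nis(S,f_*\cO_X^\times/\cO_S^\times)$ via the isomorphism $\Pic(f)\cong\cI(f)$ already established and the contracted-functor description of $\Pic$ from \cite{SW}. Reassembling these pieces yields the claimed extension.

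\medskip
\noindent\textbf{Main obstacle.} The routine parts are vanishing of negative $K$-theory above the dimension and turning $H^*(X,\Z)$ into $H^*(S,f_*\Z)$ via finiteness. The delicate step is in part (2): carefully tracking the rank and determinant contributions at the $K_0$ level and verifying that the four-term piece $K_0(S)\to K_0(X)\to K_{-1}(f)\to K_{-1}(S)\to K_{-1}(X)$ collapses cleanly into the claimed two-term extension, i.e.\ that the boundary from $H^0_\nis(S,f_*\Z/\Z)$ to $H^1_\nis(S,\Z)$ is compatible with the $K$-theoretic boundary and that no spurious contributions appear on the $\Pic$ side.
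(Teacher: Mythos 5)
Your argument for part (1) is essentially the paper's own proof: the long exact sequence of the pair together with $K_n(Y)=0$ for $n<-d$ and $K_{-d}(Y)\cong H^d_\cdh(Y,\Z)$ from \cite{CHSW} gives $K_{-d-1}(f)\cong\coker\bigl(H^d_\cdh(S,\Z)\to H^d_\cdh(S,f_*\Z)\bigr)$, and the triangle $\Z\to f_*\Z\to f_*\Z/\Z$ plus $H^{d+1}_\cdh(S,\Z)=0$ finishes it. No complaints there.

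For part (2), however, your route diverges from the paper's and has a genuine gap at the ``reassembling'' step. From the four-term sequence you only get
\[
0 \to \coker\bigl(K_0(S)\to K_0(X)\bigr) \to K_{-1}(f) \to \ker\bigl(K_{-1}(S)\to K_{-1}(X)\bigr) \to 0,
\]
and the quotient here is \emph{not} $H^0_\nis(S,f_*\Z/\Z)$: the sheaf sequence for $\Z\to f_*\Z\to f_*\Z/\Z$ shows that $H^0_\nis(S,f_*\Z/\Z)$ is itself an extension of $\ker\bigl(H^1_\nis(S,\Z)\to H^1_\nis(S,f_*\Z)\bigr)$ by $\coker\bigl(H^0_\nis(S,\Z)\to H^0_\nis(X,\Z)\bigr)$. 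That rank cokernel sits inside $\coker\bigl(K_0(S)\to K_0(X)\bigr)$ on the \emph{sub} side of your sequence, so to reach the claimed statement you must peel it off the subobject and re-glue it onto the quotient. This requires identifying the resulting extension class, i.e.\ matching the $K$-theoretic boundary $K_0(X)\to K_{-1}(f)\to K_{-1}(S)$ on ranks with the sheaf-cohomology boundary $H^0_\nis(S,f_*\Z/\Z)\to H^1_\nis(S,\Z)$, and simultaneously checking that the snake-lemma connecting map does not cut down the Picard contribution $\coker(\Pic(S)\to\Pic(X))\cong H^1_\nis(S,f_*\cO_X^\times/\cO_S^\times)$. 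You flag this as the delicate step but do not supply it, and it is precisely where the content lies. The paper avoids the issue entirely: it runs the Thomason--Trobaugh Nisnevich descent spectral sequence $H^p_\nis(S,\cK_{-q}(f))\Rightarrow K_{-p-q}(f)$ for the \emph{relative} theory, computes the stalks over a $1$-dimensional hensel local ring (Lemma \ref{hensel-1dim}: $K_0(f)\cong B^\times/A^\times$, $K_{-1}(f)\cong\Z^{r-1}$, $K_n(f)=0$ for $n<-1$) to identify $\cK_0(f)=f_*\cO_X^\times/\cO_S^\times$ and $\cK_{-1}(f)=f_*\Z/\Z$, and then $cd_\nis(S)\le1$ forces degeneration, yielding the stated extension and the $K_{-2}$ computation in one stroke. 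If you want to salvage your global approach you would need to prove the boundary compatibility explicitly; otherwise adopt the local-to-global argument.
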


\section{Relative $\Pic$ and invertible submodules}

In \cite{Bass-Tata}, Bass defined $\Pic(f)$ to be the abelian group 
generated by $[L_1,\alpha,L_2]$, where the $L_i$ are 
line bundles on $S$ and $\alpha:f^*L_1\to f^*L_2$ is an isomorphism.
The relations are:
\begin{enumerate}
\item $[L_1,\alpha,L_2] + [L'_1,\alpha',L'_2] = 
[L_1\oo L'_1,\alpha\oo\alpha',L_2\oo L'_2]$;
\item $[L_1,\alpha,L_2] + [L_2,\beta,L_3] = [L_1,\beta\alpha,L_3]$;
\item $[L_1,\alpha,L_2]=0$ if $\alpha=f^*(\alpha_0)$ 
for some $\alpha_0:L_1\cong L_2$.
\end{enumerate}

\begin{subrem}\label{[L,a]}
By (1), every element of $\Pic(f)$ has the form $[L,\alpha,\cO_S]$.
Writing $[L,\alpha]$ for $[L,\alpha,\cO_S]$, an
alternative presentation for $\Pic(f)$ is that it is generated
by elements $[L,\alpha]$ satisfying:
$[L,\alpha] + [L',\alpha'] = [L\oo L',\alpha\oo\alpha']$;
$[L,\alpha]=0$ if (and only if) there is an isomorphism
$\alpha_0:L\cong\cO_S$ so that $\alpha=f^*(\alpha_0)$.
It is easy to see, and observed by Bass, that the map
$\Pic(f)\to\Pic(S)$ sending $[L,\alpha]$ to $[L]$ 
fits into an exact sequence \eqref{seq:Pic}, where
$\partial(b)=[\cO_S,b]$.
\end{subrem}

\begin{proposition}\label{hyper}
Bass'\hspace{3pt} $\Pic(f)$ is the hypercohomology group 
$H^0(S,\cO_S^\times\!\to\!f_*\cO_X^\times)$.
\end{proposition}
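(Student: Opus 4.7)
The plan is to compute $\mathbb{H}^0$ explicitly via Čech cocycles and match it on the nose with the presentation of $\Pic(f)$ given in Remark~\ref{[L,a]}.

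View $\cO_S^\times \to f_*\cO_X^\times$ as a complex concentrated in degrees $-1$ and $0$, and fix a Zariski open cover $\cU=\{U_i\}$ of $S$. Unpacking the associated total Čech bicomplex shows that a $0$-cocycle is a pair $(h,g)$ with $h_i\in\cO^\times(f^{-1}U_i)$ and $g_{ij}\in\cO^\times(U_{ij})$ satisfying
\[
g_{ij}\,g_{jk}=g_{ik}\text{ on }U_{ijk},\qquad h_j=f^*(g_{ij})\cdot h_i\text{ on }f^{-1}U_{ij},
\]
while a $0$-coboundary has the form $(f^*a_i,\,a_j a_i^{-1})$ for some $a_i\in\cO^\times(U_i)$. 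Passing to the colimit over refinements of $\cU$ identifies $\check{\mathbb{H}}^0$ with $\mathbb{H}^0$, using only the classical fact that $\check H^0(\cU,\cF)\cong H^0(S,\cF)$ for each of $\cF=\cO_S^\times,\,f_*\cO_X^\times$.

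Next, define $\Phi:\Pic(f)\to\mathbb{H}^0$ on generators $[L,\alpha]$. Choose $\cU$ trivializing $L$ by isomorphisms $s_i:\cO_{U_i}\xrightarrow{\sim} L|_{U_i}$: the transition data $g_{ij}=s_j^{-1}s_i$ is the $\cO_S^\times$-cocycle defining $L$, while $\alpha\circ f^*(s_i)$ is a unit $h_i\in\cO^\times(f^{-1}U_i)$. The cocycle condition on $(h,g)$ is precisely the statement that $\alpha$ is globally defined on $f^*L$. Changing the $s_i$ by $a_i\in\cO^\times(U_i)$ modifies $(h,g)$ by a coboundary, so $\Phi$ is independent of trivialization (and, after passing to refinements, independent of $\cU$). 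Bass's additivity translates to pointwise multiplication of cocycles, and the relation $[L,f^*\alpha_0]=0$ translates into $(h,g)$ being a coboundary: choose $s_i$ so that $\alpha_0\circ s_i=\mathrm{id}$, whence $h_i=1$ and $g_{ij}$ is the coboundary $a_j a_i^{-1}$ of the trivializing data.

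The inverse $\Psi$ is the obvious reverse construction: from $(h,g)$, the cocycle $g$ builds a line bundle $L$ on $S$, and the compatibility condition is exactly what lets the local units $h_i$ glue into a global isomorphism $\alpha:f^*L\xrightarrow{\sim}\cO_X$. That $\Phi$ and $\Psi$ are mutually inverse group homomorphisms is a direct check on cocycles. The main obstacle is purely bookkeeping: matching the translation between Bass data $[L,\alpha]$ and Čech data $(h,g)$ so that generators, relations, and cocycle/coboundary conditions correspond faithfully—once one lines up the two descriptions, everything is forced.
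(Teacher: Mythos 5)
Your proposal is correct and follows essentially the same route as the paper: both proofs unwind degree-zero \v{C}ech cocycles of the cone of $\cO_S^\times\to f_*\cO_X^\times$ into the data $(\{g_{ij}\},\{h_i\})$ of a line bundle $L$ on $S$ together with a trivialization of $f^*L$, and match this against Bass's presentation. The only real difference is the final step: you build an explicit two-sided inverse $\Psi$ and verify compatibility with the relations directly, whereas the paper defines only the map from cocycles to $\Pic(f)$ and concludes it is an isomorphism by the 5-lemma applied to the five-term hypercohomology sequence over the sequence \eqref{seq:Pic}. One small caveat in your version: the identification of the \v{C}ech $\mathbb{H}^0$ with the derived $\mathbb{H}^0$ requires $\check{H}^1(S,\cO_S^\times)\cong H^1(S,\cO_S^\times)$ in addition to the $\check{H}^0$ statement you cite (both are classical, and the comparison is itself most cleanly done by the same 5-lemma the paper uses), so this is an understated citation rather than a gap.
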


\begin{proof}
Let $C^*$ denote the mapping cone of $\cO_S^\times\!\to\!f_*\cO_X^\times$.
A 0-cocyle of $C^*$ is given by a cover
$\{U_i\}$ of $S$, a unit $b_i$ of $f^{-1}(U_i)$ for each $i$, and units
$a_{ij}$ of $U_i\cap U_j$ for each $i,j$ satisfying the cocyle condition
(so that the $\{a_{ij}\}$ define a line bundle $L$ on $S$) and such that
$b_i/b_j=f^\#(a_{ij})$ on each $f^{-1}(U_i\cap U_j)$. Since the $\{b_i\}$
define an isomorphism $f^*L\cong\cO_X$, each 0-cocyle defines an element
$\lambda=[L,\beta,\cO_S]$ of $\Pic(f)$.  A 0-coboundary is given by 
$a_{ij}=a_i/a_j$ and $b_i=f^\#(a_i)$ for units $a_i$ of $U_i$; adding it to
a cocyle does not change $\lambda$.  Refining the cover does not change 
$\lambda$ either. The result follows from the 5-lemma applied to the following
diagram with exact rows (which is easily checked to be commutative):
\footnotesize
\[\begin{CD}
H^0(S,\cO^\times) @>>> H^0(X,\cO^\times) @>>> H^0(S,C^*) @>>>
H^1(S,\cO^\times) @>>> H^1(X,\cO^\times)\qquad  \\ 
@V{\cong}VV  @V{\cong}VV  @VVV  @V{\cong}VV  @V{\cong}VV \\
\cO^\times(S) @>>> \cO^\times(X) @>>>\Pic(f) @>>> \Pic(S) @>>> \Pic(X).
\qedhere\end{CD}\]
\normalsize
\end{proof}

\smallskip
Now suppose that $f$ is faithful and affine.
As observed in \cite{SW}, $\cI(f)$ is isomorphic to 
$H^0(S,f_*\cO_X^\times/\cO_S^\times)$. Thus Proposition \ref{hyper}
implies that $\cI(f)\cong\Pic(f)$.  
Here is a more elementary proof.


\begin{lemma}\label{Pic=I}
If $f:X\to S$ is a faithful affine map, there is an isomorphism 
$\rho:\cI(f)\!\map{\cong}\!\Pic(f)$, 
sending $L$ to $[L,i,\cO_S]$, where
$i:f^*L\cong\cO_X$.
\end{lemma}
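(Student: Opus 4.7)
The plan is to construct $\rho$ and an inverse $\sigma$ explicitly using the $(f^*,f_*)$-adjunction, avoiding the hypercohomology argument of Proposition~\ref{hyper}. Given an invertible $\cO_S$-submodule $L\hookrightarrow f_*\cO_X$, the inclusion has adjoint $i:f^*L\to\cO_X$. Working Zariski-locally on $S$, one may write $L=\cO_S\cdot b$ for a unit $b$ of $f_*\cO_X$ (from the definition of invertible submodule); under this trivialization $i$ sends $1\oo b\mapsto b$, so $i$ is locally, and therefore globally, an isomorphism. Set $\rho(L):=[L,i,\cO_S]\in\Pic(f)$. Multiplicativity $\rho(LL')=\rho(L)+\rho(L')$ follows from the compatibility of the adjunction with tensor products: the adjoint of the multiplication map $L\oo_{\cO_S}L'\hookrightarrow f_*\cO_X$ equals $i_L\oo i_{L'}$ under $f^*(L\oo L')\cong f^*L\oo f^*L'$.

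For the inverse, the plan is to send $[M,\beta,\cO_S]$ to $\sigma([M,\beta]):=\tilde\beta(M)\subseteq f_*\cO_X$, where $\tilde\beta:M\to f_*\cO_X$ is the adjoint of the isomorphism $\beta:f^*M\xrightarrow{\sim}\cO_X$. Two things must be checked: (i) $\tilde\beta$ is injective and its image is an invertible $\cO_S$-submodule, and (ii) the relations defining $\Pic(f)$ are respected. For (i), trivialize $M\cong\cO_S$ locally; then $\tilde\beta$ becomes $\cO_S\to f_*\cO_X$ sending $1\mapsto u$, where $u=\beta(1\oo 1)$ is a unit of $f_*\cO_X$ (since $\beta$ is an iso). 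This factors as multiplication by $u$ (an automorphism of $f_*\cO_X$) composed with $\cO_S\hookrightarrow f_*\cO_X$, which is injective because $f$ is faithful; the image $\cO_S\cdot u$ is locally generated by a unit, hence an invertible submodule. For (ii), if $\beta=f^*(\alpha_0)$ for an iso $\alpha_0:M\cong\cO_S$, naturality of the adjunction gives $\tilde\beta=(\cO_S\hookrightarrow f_*\cO_X)\circ\alpha_0$, whose image is $\cO_S$, the identity element of $\cI(f)$; multiplicativity is routine.

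It remains to verify that $\rho$ and $\sigma$ are mutual inverses. The equality $\sigma\rho=\mathrm{id}_{\cI(f)}$ is immediate, because the adjoint of $i$ is the original inclusion $L\hookrightarrow f_*\cO_X$. For $\rho\sigma=\mathrm{id}_{\Pic(f)}$, let $\bar\beta:M\xrightarrow{\sim}\tilde\beta(M)$ be the factorization of $\tilde\beta$ through its image, and $i':f^*\tilde\beta(M)\xrightarrow{\sim}\cO_X$ the adjoint of the inclusion $\tilde\beta(M)\hookrightarrow f_*\cO_X$; naturality of the adjunction gives $\beta=i'\circ f^*\bar\beta$, hence
\[
[M,\beta,\cO_S]=[M,f^*\bar\beta,\tilde\beta(M)]+[\tilde\beta(M),i',\cO_S]=[\tilde\beta(M),i',\cO_S]=\rho\sigma([M,\beta]),
\]
where the first equality is Bass's relation~(2) and the summand $[M,f^*\bar\beta,\tilde\beta(M)]$ vanishes by relation~(3) applied with $\alpha_0=\bar\beta$. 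The main obstacle in this argument is the injectivity check in step~(i) above, which is precisely where the faithfulness hypothesis on $f$ enters essentially; the remainder is a formal exercise in adjunctions.
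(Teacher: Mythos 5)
Your construction is correct and is essentially the paper's own proof: the paper likewise defines $\rho(L)=[L,i,\cO_S]$ and inverts it by sending a triple to the image of $L=L_2^{-1}\oo L_1$ inside $L\oo f_*\cO_X\cong f_*\cO_X$, which is exactly your adjoint map $\tilde\beta$, with faithfulness entering at the same point (injectivity of $L\to L\oo f_*\cO_X$). Your final verification of $\rho\sigma=\mathrm{id}$ via relations (2) and (3) just spells out a step the paper leaves implicit.
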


The isomorphism $f^*L\cong\cO_X$ is well defined, because in any affine 
open $U=\Spec(A)$ of $S$ we have $f^{-1}U=\Spec(B)$ with $A\subset B$; 
it was proven by Roberts and Singh \cite{rs} that
$L\subset B$ induces $L\oo_AB \cong B$.

\begin{proof}
Since $\rho(LL')=[L\oo L',i\oo i',\cO_S]=[L,i,\cO_S]+[L',i',\cO_S]$,
$\rho$ is a homomorphism.  To define the inverse map, we use the 
presentation of $\Pic(f)$ and the observation that because
$\cO_S\to f_*\cO_X$ is an injection, so is $L\to L\oo f_*\cO_X$
for every line bundle $L$. Given a triple $[L_1,\alpha,L_2]$, we set
$L=L_2^{-1}\oo L_1$, so that $\alpha$ induces an isomorphism
$f^*L\cong f^*(L_2)^{-1}\oo f^*(L_1)\cong \cO_X$, 
and define $\psi([L_1,\alpha,L_2])$ to be the 
submodule $L$ of $L\oo f_*\cO_X\cong f_*\cO_X$.
Since $\psi$ is compatible with
the relations of $\Pic(f)$, it descends to a homomorphism
$\psi:\Pic(f)\to\cI(f)$.  Since 
$[L_1,\alpha,L_2]=[L_2^{-1}\oo L_1,\alpha,\cO_S]$ in $\Pic(f)$
and $f^*(L)=\cO_X$ for all $L\in\cI(f)$, $\psi$ is an inverse to $\rho$.
\end{proof}

\section{Relative $K_0$ and $\Pic$}

Bass gave a presentation of a relative group $K_0(f)$ associated to
$f:A\to B$ in \cite{Bass-Tata} and \cite[VII.5]{b};
see \cite[II.2.10]{WK}. 
It is generated by triples $[P_1,\alpha,P_2]$, where the 
$P_i$ are finitely generated projective $A$-modules (or vector bundles 
on $S$) and $\alpha$ is an isomorphism $f^*(P_1)\map{\cong}f^*(P_2)$,
and agrees with the group $\pi_0K(f)$ of \cite[IV.1.11]{WK}.
The relations are: \\
(1) $[P_1,\alpha,P_2]+[P'_1,\alpha',P'_2]=
     [P_1\oplus P'_1,\alpha\oplus\alpha',P_2\oplus P'_2]$, \\
(2) $[P_1,\alpha,P_2]+[P_2,\beta,P_3]=[P_1,\beta\alpha,P_3]$, \\
(3) $[P_1,\alpha,P_2]=0$ if $\alpha=f^*(\alpha_0)$
for some $\alpha_0:P_1\cong P_2$.
\\
By (1), every element of $K_0(f)$ has the form $[P,\alpha,A^n]$.

Bass showed \cite[VII.5.3]{b} that there is an exact sequence
for each $f:A\rightarrow B$:
\begin{equation}\label{seq:K0}
K_1(A) \map{f^*} K_1(B) \map{\partial} K_0(f) \map{} K_0(A) \map{} K_0(B),
\end{equation}
where for $g\in GL_n(B)$ we have $\partial([g])=[A^n,g,A^n]$.
Since we do not know if the corresponding sequence is exact for a 
quasi-projective map $f:X\to S$, we will restrict to the affine case
in this section and the next.
\goodbreak

\begin{lemma}[Excision]\label{excision} 
Let $f:A\to B$ be a ring homomorphism, and $I$ is an ideal of $A$
mapping isomorphically onto an ideal of $B$; write
$\bar{f}:A/I\subset B/I$ for the induced map.
Then excision holds for $K_n$ for all $n\le0$: 
$K_n(f) \cong K_n(\bar{f})$.
\end{lemma}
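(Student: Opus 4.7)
The plan is to recognize the given data as a Milnor patching square and appeal to the Bass--Weibel Mayer--Vietoris theorem for $K_n$, $n\le 0$.

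First, the hypothesis that $I\subset A$ maps isomorphically onto the ideal $I\subset B$ is exactly what is needed to make
\[
\begin{CD}
A @>f>> B \\
@VVV @VVV \\
A/I @>\bar f>> B/I
\end{CD}
\]
a Milnor square, i.e., $A\cong B\times_{B/I}A/I$. This is a direct verification: given $(b,\bar a)$ in the pullback, pick any lift $a_0\in A$ of $\bar a$; then $f(a_0)-b$ lies in $I\subset B$, and using the given isomorphism $I\subset A\to I\subset B$ we correct $a_0$ by the unique preimage of this element to produce the required $a\in A$ with $f(a)=b$ and $a\equiv\bar a\bmod I$.

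Bass's patching theorem then supplies a six-term Mayer--Vietoris sequence in $K_0$ and $K_1$ for this Milnor square, and Weibel's construction of negative $K$-theory by iterated Bass contraction extends it to a long exact sequence in every non-positive degree:
\[
\cdots\to K_n(A)\to K_n(B)\oplus K_n(A/I)\to K_n(B/I)\to K_{n-1}(A)\to\cdots,\qquad n\le 1.
\]
Equivalently, the induced square of $K$-theory spectra is homotopy cartesian after truncation above degree~$0$. Since the horizontal homotopy fibers are by definition $K(f)$ and $K(\bar f)$, the pullback property immediately gives that the induced map $K(f)\to K(\bar f)$ is an equivalence on $\pi_n$ for $n\le 0$; that is, $K_n(f)\cong K_n(\bar f)$ for $n\le 0$. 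A more elementary alternative is to stack the two Bass sequences \eqref{seq:K0} for $f$ and $\bar f$ vertically, splice in the Mayer--Vietoris sequence, and apply the five lemma in each negative degree.

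The one subtle point, and the only reason for the restriction $n\le 0$, is that the Mayer--Vietoris sequence for a Milnor square need not extend into positive $K$-theory without additional hypotheses on $I$ (this is the classical failure of excision for higher $K$-theory). By contrast, Bass's construction of $K_{-n}$ from $K_0$ together with iterated patching is precisely designed to propagate the six-term Mayer--Vietoris sequence into all non-positive degrees, which is what makes the range $n\le 0$ the natural one.
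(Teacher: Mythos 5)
Your proof is correct, but it reaches the conclusion by a different (though cognate) route. The paper works directly with the double-relative groups: it quotes the classical facts that $K_0(A,I)\cong K_0(B,I)$ and that $K_1(A,I)\to K_1(B,I)$ is onto to conclude $K_0(A,B,I)=0$, applies Bass contraction to get $K_{-1}(A,B,I)=0$, and then reads the isomorphism off the exact sequence $K_0(A,B,I)\to K_0(f)\to K_0(\bar f)\to K_{-1}(A,B,I)$. You instead observe that the hypotheses make $A\cong B\times_{B/I}A/I$ a Milnor square (a verification the paper leaves implicit, and which you carry out correctly) and invoke the Bass--Milnor Mayer--Vietoris sequence extended to negative degrees. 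These are two packagings of the same underlying input: exactness of that MV sequence in degrees $\le 1$ is precisely the statement that the birelative groups $K_n(A,B,I)$ vanish for $n\le 0$. What the paper's version buys is economy -- it isolates exactly the two vanishing statements it needs ($n=0,-1$) and gets the rest for free by contraction; what yours buys is a uniform treatment of all $n\le 0$ and a conceptual reason ("the square of nonconnective $K$-theory spectra has trivial total fiber in this range") for why the horizontal fibers agree. One small caveat: your "more elementary alternative" of stacking the two Bass sequences and applying the five lemma does not work verbatim, since the vertical maps $K_i(A)\to K_i(A/I)$ and $K_i(B)\to K_i(B/I)$ are not isomorphisms; the diagram chase one actually performs uses exactness of the MV sequence at $K_{n+1}(B/I)$ and at $K_n(B)\oplus K_n(A/I)$, which is equivalent to your primary (total-fiber) argument rather than a genuinely independent one.
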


\begin{proof}
It suffices to consider the case $n=0$.
Because $K_0(A,I)\cong K_0(B,I)$ \cite[Ex.\,II.2.3]{WK} and
$K_1(A,I)\to K_1(B,I)$ is onto \cite[III.2.2.1]{WK},
the double-relative group vanishes: $K_0(A,B,I)=0$. Applying
contraction, we also have $K_{-1}(A,B,I)=0$. The result 
now follows from the exact sequence
\[
K_0(A,B,I) \to K_0(f) \to K_0(\bar{f}) \to K_{-1}(A,B,I).
\qedhere\]
\end{proof}

\begin{remm}
The failure of Lemma \ref{excision} in the non-affine setting was
investigated in \cite[A.5--6]{PW}. For example, if $X$ is the normalization
of $S$ and the support $Y$ of the conductor $\cc$ is 1-dimensional, 
the obstruction is $K_0(S,X,Y)\cong H^1(Y,\cc/\cc^2\oo\Omega_{X/S})$.
\end{remm}

As observed by Bass and Murthy long ago \cite{bm}, 
the determinant $K_0(S)\to\Pic(S)$ induces a surjective homomorphism
\begin{equation}\label{eq:det}
\det:K_0(f)\to \Pic(f), \quad 
\det[P_1,\alpha,P_2] = [\det(P_1),\det(\alpha),\det(P_2)].
\end{equation}
Since $SK_0(S)$ is the kernel of $\det\!:K_0(S)\to\Pic(S)$, we write
$SK_0(f)$ for the kernel of $\det:K_0(f)\to\Pic(f)$.

Recall \cite[II.4.2]{WK} that a $\lambda$-ring $K=\Z\oplus\widetilde{K}$
has a {\it positive structure} if it contains a $\lambda$-semiring $P$
(positive elements) including $\mathbb N$, such that every element of
$\widetilde{K}$ can be written as a difference of positive elements, the
augmentation $\epsilon:K\to\Z$ sends $P$ to $\N$ and, if $p\in P$ has
$\epsilon(p)=n$, then $\lambda^ip=0$ for $i>n$ and $\lambda^np$ is a unit.
The {\it line elements} are $\{p\in P:\epsilon(p)=1\}$; they form
a subgroup of the units of $K$.

\begin{proposition}\label{lambda-ops}
Let $f: A\to B$ be a homomorphism of commutative rings.
The operations $\lambda^i[P_1,\alpha,P_2]=
[\Lambda^iP_1,\Lambda^i\alpha,\Lambda^iP_2]$
give $\Z\oplus K_0(f)$ the structure of a $\lambda$-ring
with a positive structure. The top two ideals in the $\gamma$-filtration
are $F^1_\gamma=\widetilde{K}_0$ and $F^2_\gamma=SK_0(f)$, and the
group of its line elements is $\Pic(f) \cong F^1_\gamma/F^2_\gamma$.
\end{proposition}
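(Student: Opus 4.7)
The proof proceeds in three steps: first construct the $\lambda$-operations on $\Z \oplus K_0(f)$; second set up a positive structure whose line elements can be identified with $\Pic(f)$; and third extract the $\gamma$-filtration statements from the general theory of $\lambda$-rings with positive structure, as developed in \cite[II.4]{WK}.

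For the first step, I would check that the formula $\lambda^i[P_1,\alpha,P_2] := [\Lambda^i P_1, \Lambda^i\alpha, \Lambda^i P_2]$ respects Bass's three defining relations for $K_0(f)$: relation (1) from the classical decomposition $\Lambda^i(M \oplus N) \cong \bigoplus_{j+k=i} \Lambda^j M \otimes \Lambda^k N$ applied simultaneously to source, target, and diagonal isomorphism; relation (2) from the functoriality $\Lambda^i(\beta\alpha) = \Lambda^i\beta \circ \Lambda^i\alpha$; and relation (3) from the $f^*$-naturality $\Lambda^i f^*(\alpha_0) = f^*(\Lambda^i\alpha_0)$. These operations extend to $\Z \oplus K_0(f)$ in the standard way, and the $\lambda$-ring axioms (additivity $\lambda^i(x+y) = \sum_j \lambda^j(x)\,\lambda^{i-j}(y)$ and the universal polynomial identities for $\lambda^i(xy)$ and $\lambda^i\lambda^j$) reduce to standard identities on exterior powers applied levelwise to triples.

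For the positive structure, I take positive elements to be the classes of honest triples $[P_1, \alpha, P_2]$ with $P_1, P_2$ of common rank $n$, contributing to $\Z \oplus K_0(f)$ with augmentation $\epsilon = n$; together with $\N$ these form a $\lambda$-semiring, and every element is a difference of two positives (noting $[A^n, \mathrm{id}, A^n] = 0$ by relation (3)). The line elements are the positive elements with $\epsilon = 1$, i.e.\ triples $[L_1, \alpha, L_2]$ with $L_i$ line bundles; comparing with Bass's presentation of $\Pic(f)$ in Remark \ref{[L,a]} shows that the group of line elements is canonically isomorphic to $\Pic(f)$. The general theory of \cite[II.4]{WK} then identifies $F^1_\gamma$ with the augmentation ideal $\widetilde{K}_0 = K_0(f)$ and gives a canonical isomorphism from the line element group to $F^1_\gamma / F^2_\gamma$ sending $\ell \mapsto \ell - 1$. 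Since the composite $K_0(f) \twoheadrightarrow F^1_\gamma / F^2_\gamma \cong \Pic(f)$ agrees on generators with the determinant $\det$ of \eqref{eq:det}, we conclude $F^2_\gamma = \ker(\det) = SK_0(f)$.

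The hard part is the verification of the $\lambda$-ring axioms, especially the consistency of $\lambda^i$ with the composition relation (2) in the presence of the additivity formula. The cleanest way to handle this is to reinterpret $K_0(f)$ as the Grothendieck group of the exact category of such triples, so that both (1) and (2) arise from admissible short exact sequences and the $\lambda$-structure follows from the standard formalism of exterior-power functors on exact categories.
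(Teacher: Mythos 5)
Your strategy---verify the $\lambda$-operations directly on Bass's presentation of $K_0(f)$ and then invoke the general theory of positive structures---is not the route the paper takes, and as written it has a genuine gap at exactly the point you yourself flag as ``the hard part.''

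First, a $\lambda$-ring is in particular a ring, and you never define the product of two elements of $K_0(f)$ inside $\Z\oplus K_0(f)$. It cannot be the square-zero product: the identity $\lambda^i(x+y)=\sum_{j+k=i}\lambda^j(x)\lambda^k(y)$ combined with $\Lambda^i(P\oplus P')\cong\bigoplus_{j+k=i}\Lambda^jP\otimes\Lambda^kP'$ forces the cross-terms $\lambda^j(x)\lambda^k(y)$ to be nonzero. Second, and relatedly, compatibility with relation (2) is not a matter of the functoriality $\Lambda^i(\beta\alpha)=\Lambda^i\beta\circ\Lambda^i\alpha$: since $\lambda^i$ is not additive, respecting $[P_1,\alpha,P_2]+[P_2,\beta,P_3]=[P_1,\beta\alpha,P_3]$ requires $\sum_{j+k=i}\lambda^j[P_1,\alpha,P_2]\cdot\lambda^k[P_2,\beta,P_3]=[\Lambda^iP_1,\Lambda^i(\beta\alpha),\Lambda^iP_3]$, which again needs the undefined product. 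Your proposed repair---realizing $K_0(f)$ as the Grothendieck group of an exact category of triples so that ``both (1) and (2) arise from admissible short exact sequences''---does not work: relation (2) is the composition relation and is \emph{not} a consequence of exact sequences of triples; it is an independent relation in Bass's presentation. Finally, the special $\lambda$-ring axioms (the universal polynomial identities for $\lambda^i(xy)$ and $\lambda^i\lambda^j$) are asserted to ``reduce to standard identities'' but no mechanism is given.

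The paper sidesteps all of this with a reduction to the absolute case. Choose a surjection $\pi:\Z[X]\to B$ from a polynomial ring and form the pullback ring $R=A\times_B\Z[X]$ with projection $\tilde f:R\to\Z[X]$. Since $K_1(\Z[X])=\pm1$ and $K_0(\Z[X])=\Z$, one gets $K_0(\tilde f)\cong\widetilde{K}_0(R)$ compatibly with the $\lambda^i$, and similarly $\Pic(\tilde f)\cong\Pic(R)$; Excision (Lemma \ref{excision}) then identifies $K_0(\tilde f)\cong K_0(f)$ and $\Pic(\tilde f)\cong\Pic(f)$. Every assertion of the proposition thus becomes the classical statement for the commutative ring $R$: $K_0(R)$ is a $\lambda$-ring with positive structure, $F^2_\gamma=SK_0(R)$, and $\widetilde{K}_0(R)/SK_0(R)\cong\Pic(R)$. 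If you want to keep a direct argument you must supply the product on $K_0(f)$ and carry out all the verifications above; the pullback trick is precisely what makes these unnecessary.
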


\begin{proof}
Given $f:A\to B$, choose a surjection $\pi:\Z[X]\to B$
from  a polynomial ring $\Z[X]$ in many variables to $B$; let $R$ be
the pullback ring $R=\{(p,a)\in\Z[X]\times A:\pi(p)=f(a)\}$,
with $\tilde{f}: R\to\Z[X]$ the projection.
Since $K_1(\Z[X])=\pm1$ and $K_0(\Z[X])=\Z$, 
we have $K_0(\tilde{f})\map{\cong} \widetilde{K}_0(R)$, and this map is
compatible with the operations $\lambda^i$.
Similarly, we have $\Pic(\tilde{f})\cong\Pic(R)$. 
By Excision \ref{excision} for $K_0$ and $\Pic$, 
$K_0(\tilde{f})\cong K_0(f)$ and $\Pic(\tilde{f})\cong\Pic(f)$.  
Hence $\Z\oplus K_0(f)\cong\Z\oplus\widetilde{K}_0(R)$ is a $\lambda$-ring.
Thus the result follows from the fact that the operations $\lambda^i$
make $K_0(R)$ into a $\lambda$-ring, with $F^2_\gamma=SK_0(R)$, and
$\widetilde{K}_0(R)/SK_0(R)\cong\Pic(R)$.
\end{proof}

Recall (Swan \cite{swan}) that an extension $A\subset B$ is 
said to be {\it subintegral} if 
$B$ is integral over $A$, and $\Spec(B)\to\Spec(A)$ is a bijection 
inducing isomorphisms on all residue fields.

\begin{proposition}\label{subint} (Ischebeck)
If $f:A\hookrightarrow B$ is subintegral then $K_0(f)\cong \Pic(f)$,
$K_n(f)=0$ for all $n<0$,
and there is an exact sequence
\[
1 \to B^\times/A^\times \to K_0(f) \to K_0(A) \to K_0(B) \to 0.
\]
\end{proposition}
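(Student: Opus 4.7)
The plan proceeds in three steps: derive the displayed exact sequence from Bass's relative sequence \eqref{seq:K0}, then prove $K_0(f)\cong\Pic(f)$ via the determinant, and finally handle vanishing in negative degrees.

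For the exact sequence, start from \eqref{seq:K0} and note two classical inputs for subintegral $A\hookrightarrow B$: (i) every finitely generated projective $B$-module is extended from a projective $A$-module, by the theorem of Bass--Murthy and Greco--Traverso, so $K_0(A)\to K_0(B)$ is surjective; and (ii) $SK_1(A)\to SK_1(B)$ is surjective, so that $\coker(K_1(A)\to K_1(B))=B^\times/A^\times$. Both rest on the hypothesis that $\Spec(B)\to\Spec(A)$ is a bijection inducing isomorphisms on residue fields. Splicing (i) and (ii) into \eqref{seq:K0} yields the displayed sequence.

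For $K_0(f)\cong\Pic(f)$, the determinant \eqref{eq:det} surjects with kernel $SK_0(f)=F^2_\gamma$ by Proposition \ref{lambda-ops}. Rewriting \eqref{seq:Pic} as $1\to B^\times/A^\times\to\Pic(f)\to\ker(\Pic(A)\to\Pic(B))\to 0$ and comparing with the sequence from Step~1 produces the ladder
\[
\begin{CD}
1@>>>B^\times/A^\times@>>>K_0(f)@>>>\ker(K_0(A)\to K_0(B))@>>>0\\
@.@|@V{\det}VV@VVV@.\\
1@>>>B^\times/A^\times@>>>\Pic(f)@>>>\ker(\Pic(A)\to\Pic(B))@>>>0,
\end{CD}
\]
whose right vertical arrow is an isomorphism whenever $SK_0(A)\to SK_0(B)$ is one (by a snake-lemma chase on the short exact sequences $0\to SK_0\to\widetilde{K}_0\to\Pic\to 0$ for $A$ and for $B$); the five-lemma then forces $\det:K_0(f)\to\Pic(f)$ to be an isomorphism. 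For vanishing $K_n(f)=0$ when $n<0$, filter $A\subset B$ by elementary subintegral steps $R\subset R[b]$ with $b^2,b^3\in R$, apply Excision (Lemma \ref{excision}) to the conductor ideal of each step, and reduce to the zero-dimensional quotient extension, whose relative negative $K$-theory vanishes by direct computation; alternatively, invoke the cdh-descent principle of Cortinas--Haesemeyer--Schlichting--Weibel to conclude that subintegral extensions induce isomorphisms on $K_n$ for $n<0$, whence $K_n(f)=0$.

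The main obstacle is the middle step, specifically establishing $SK_0(A)\cong SK_0(B)$ under the subintegral hypothesis; all remaining components are formal manipulations of Bass's exact sequence together with standard excision arguments.
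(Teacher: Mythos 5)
Your first two steps track the paper's argument: the displayed exact sequence comes from \eqref{seq:K0} together with Ischebeck's surjectivity statements, and the identification $K_0(f)\cong\Pic(f)$ comes from comparing \eqref{seq:K0} with \eqref{seq:Pic} via the determinant. The input you flag as ``the main obstacle'' --- that $\det$ identifies $\ker(K_0(A)\to K_0(B))$ with $\ker(\Pic(A)\to\Pic(B))$, equivalently that $SK_0(A)\to SK_0(B)$ is an isomorphism --- is exactly what the paper takes from \cite[p.~331]{is}; since the proposition is attributed to Ischebeck this is a matter of citation rather than a logical flaw, but it is the entire content of the isomorphism $K_0(f)\cong\Pic(f)$ and cannot be left open.

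The genuine gap is in the negative-degree vanishing. Your route (a) breaks at ``reduce to the zero-dimensional quotient extension'': for an elementary subintegral step $R\subset R[b]$ the conductor $\cc$ need not be primary to a maximal ideal, so $R/\cc$ can have any dimension (e.g.\ $R=k[x,y^2,y^3]\subset k[x,y]$ has $\cc=y^2k[x,y]$ and $R/\cc\cong k[x]$), and excision via Lemma \ref{excision} only returns you to another subintegral extension, not to an artinian one; there is also no noetherian or finite-dimensionality hypothesis available to run an induction. Your route (b) invokes cdh-descent from \cite{CHSW}, which requires schemes essentially of finite type over a field of characteristic $0$; the proposition assumes nothing of the sort about $A$ and $B$. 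The paper instead uses the contracted-functor formalism: by the Fundamental Theorem \cite[III.4.1]{WK}, $K_{-1}(f)$ is the Bass contraction $LK_0(f)$, and since $A[t,1/t]\subset B[t,1/t]$ is again subintegral the already-proved isomorphism $K_0\cong\Pic$ gives $K_{-1}(f)\cong L\Pic(f)$, which vanishes by Proposition 5.6 of \cite{SW}; iterating the contraction kills $K_n(f)$ for all $n<0$. To repair your argument you would need either this contraction step or some other proof, valid for arbitrary commutative rings, that subintegral extensions induce isomorphisms on $K_n$ for $n<0$.
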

  
\begin{proof}
When $A\subset B$ is subintegral, Ischebeck proved in \cite[Prop.\,7]{is}
that the natural map $K_{0}(A)\to K_{0}(B)$ is surjective and 
$SK_1(A)\to SK_1(B)$ is onto, so the cokernel
of $K_1(A)\to K_1(B)$ is $B^\times/A^\times$. The exact sequence follows
from \eqref{seq:K0}.  Finally, Ischebeck proved in 
\cite[p.\,331]{is} that the determinant \eqref{eq:det} induces an 
isomorphism from the kernel of $K_{0}(A)\to K_{0}(B)$ onto the kernel of
$\Pic(A)\to\Pic(B)$. The result now follows from \eqref{seq:K0}.

Replacing $A$ and $B$ by Laurent polynomial extensions,
the Fundamental Theorem of $K$-theory \cite[III.4.1]{WK} implies that 
$LK_{n}(f)\cong K_{n-1}(f)$ and
$K_{-1}(f)\cong L\Pic(f)$.  Since $A[t,1/t]\subset B[t,1/t]$ is
subintegral, we have $L\Pic(f)=0$ by Proposition 5.6 of \cite{SW}.
This shows that that $K_n(f)=0$ for all $n<0$.
\end{proof}

Given an extension $f:A\hookrightarrow B$, let
$i:A\hookrightarrow{}^+\!A$ be the seminormalization of $A$ in $B$
and ${}^+f:{}^+\!A\hookrightarrow B$ the induced map. 
There is an exact sequence
\[
\cdots\to K_n(i) \to K_n(f) \to K_n({}^+f) \to K_{n-1}(i)\to\cdots.
\]

\begin{corollary}
$K_n(f) \smap{\cong} K_n({}^+\!f)$ for $n<0$, and
the following sequence is exact.
\[
0 \to K_0(i) \to K_0(f) \to K_0({}^+\!f) \to 0.
\]
\end{corollary}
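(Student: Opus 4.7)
The plan is to apply the long exact sequence displayed immediately above the corollary, exploiting that the inclusion $i\colon A\hookrightarrow{}^+\!A$ is subintegral by construction of the seminormalization. Proposition \ref{subint} then supplies the key vanishing $K_n(i)=0$ for all $n<0$ (and the exact sequence describing $K_0(i)$).

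For each $n<0$, the segment of the long exact sequence reads
\[
0=K_n(i)\to K_n(f)\to K_n({}^+\!f)\to K_{n-1}(i)=0,
\]
yielding $K_n(f)\cong K_n({}^+\!f)$ at once.

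For $n=0$, the relevant segment is
\[
K_1(f)\to K_1({}^+\!f)\xrightarrow{\partial}K_0(i)\to K_0(f)\to K_0({}^+\!f)\to K_{-1}(i)=0,
\]
so surjectivity of $K_0(f)\twoheadrightarrow K_0({}^+\!f)$ is free. The crux is to show $\partial=0$, or equivalently, that $K_1(f)\to K_1({}^+\!f)$ is surjective. I would deduce this by comparing the Bass sequences \eqref{seq:K0} for $f$ and ${}^+\!f$ in a commutative ladder with identities on the $K_2(B)$ and $K_1(B)$ columns and middle vertical $K_1(A)\to K_1({}^+\!A)$. Because $({}^+\!A)^\times\hookrightarrow B^\times$, both $\ker(K_1(A)\to K_1(B))$ and $\ker(K_1({}^+\!A)\to K_1(B))$ are forced to lie inside $SK_1$; and Ischebeck's surjection $SK_1(A)\twoheadrightarrow SK_1({}^+\!A)$, the key input of Proposition \ref{subint}, then forces the first kernel to surject onto the second. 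A short diagram chase, handling any discrepancy by lifting through the common $K_2(B)$ boundary, promotes this to surjectivity of $K_1(f)\to K_1({}^+\!f)$.

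The main obstacle I anticipate is precisely this injectivity at $K_0(i)$: it is not a formal consequence of $K_n(i)=0$ for $n<0$, and forces one to descend a degree to $K_1$ and invoke the special $SK_1$-surjectivity behavior of subintegral extensions. Once surjectivity of $K_1(f)\to K_1({}^+\!f)$ is in hand, the short exact sequence $0\to K_0(i)\to K_0(f)\to K_0({}^+\!f)\to0$ drops out of the long exact sequence immediately.
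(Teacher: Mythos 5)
Your proposal is correct, but at the one non-formal point --- the injectivity of $K_0(i)\to K_0(f)$ --- it takes a genuinely different route from the paper. The paper's proof is a one-line reduction to the relative Picard group: since $i$ is subintegral, Proposition \ref{subint} gives $K_0(i)\cong\Pic(i)$, and by \cite[Lemma 3.3]{SW} the map $\Pic(i)\to\Pic(f)$ is injective; as this composite factors as $K_0(i)\to K_0(f)\smap{\det}\Pic(f)$, the map $K_0(i)\to K_0(f)$ must be injective. You instead work one degree higher and prove that $K_1(f)\to K_1({}^+\!f)$ is onto, which is equivalent via the displayed long exact sequence. Your chase is sound: the natural splitting $K_1(R)\cong R^\times\oplus SK_1(R)$ together with $A^\times\hookrightarrow ({}^+\!A)^\times\hookrightarrow B^\times$ places both kernels inside $SK_1$; Ischebeck's surjection $SK_1(A)\twoheadrightarrow SK_1({}^+\!A)$ (the same input quoted in the proof of Proposition \ref{subint}) then lifts $\ker\bigl(K_1({}^+\!A)\to K_1(B)\bigr)$ to $\ker\bigl(K_1(A)\to K_1(B)\bigr)$; and the residual discrepancy lies in the image of $K_2(B)\to K_1({}^+\!f)$, which factors through $K_1(f)$ by naturality. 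What your version buys is independence from the relative-Picard input \cite[Lemma 3.3]{SW} and from the determinant \eqref{eq:det}; what it costs is the use of the relative sequence one degree beyond the displayed Bass sequence \eqref{seq:K0}, i.e.\ the terms $K_2(B)\to K_1(f)$, which is harmless here since $K_n(f)=\pi_nK(f)$ is defined via the homotopy fiber. The $n<0$ isomorphisms and the surjectivity of $K_0(f)\to K_0({}^+\!f)$ are obtained identically in both arguments from the vanishing $K_n(i)=0$ for $n<0$.
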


\begin{proof}
By Proposition \ref{subint} and \cite[Lemma 3.3]{SW}, the map
$K_0(i)\cong\Pic(i) \to \Pic(f)$ is an injection. Since it factors
through $K_0(i) \to K_0(f)$, the latter map is an injection.
Since $K_{n}(i)=0$ for $n<0$, again by Proposition \ref{subint}, we are done. 
\end{proof}

\goodbreak
\section{The $W(A)$-module structure on $NK_0(f)$ and $N\Pic(f)$}

In this section, we fix a ring homomorphism $f:A\to B$
and show that $NK_0(f)$ and $N\Pic(f)$ are continuous
modules over the ring $W(A)$ of big Witt vectors, so that
\begin{equation}\label{seq:NK0}
NK_1(A) \to NK_1(B) \map{\partial} NK_0(f) \to NK_0(A) \to NK_0(B)
\end{equation}
is a sequence of $W(A)$-modules. Recall that $(1+tA[[t]])^\times$ is
the underlying abelian group of the ring $W(A)$;
a $W(A)$-module is continuous if every element is killed
by one of these ideals $(1+t^nA[[t]])^\times$.

\smallskip
We first recall the continuous $W(R)$-module structure on $NK_*(A)$
when $R$ is commutative and $A$ is an $R$-algebra, 
due to Stienstra \cite{Jan}. As $NK_*(A)$  is a continuous module, 
it suffices to describe multiplication by $(1-rt^m)$, $r\in R$. 
Setting $S=R[s]/(s^{m}-r)$, the inclusion $i:R\subset S$ induces a 
base change functor $i^*:\bP(A[t])\to\bP(A\oo_RS[t])$ and a transfer map 
$i_*:\bP(A\oo_RS[t])\to\bP(A[t])$. If $\sigma$ denotes the $S$-algebra map 
$S[t]\to S[t]$, $\sigma(t)=st$, then the composition 
$F=i_*\sigma^{*}i^*$ is an additive self-functor of $\bP(A[t])$. 
As noted in \cite[1.5]{Wmod}, the composition 
$\bP(A)\to\bP(A[t])\map{F}\bP(A[t])\to\bP(A)$ is $\oo_RS$,
so $F$ induces multiplication by $m$ on the summand $K_*(A)$ of $K_*(A[t])$;
the restriction of $F$ to $NK_*(A)$ is multiplication
by $(1-rt^{m})*$. If $A\to B$ is an $R$-algebra map,
$NK_*(A)\to NK_*(B)$ is a homomorphism of continuous $W(R)$-modules.

We can adapt these formulas to define a multiplication by 
$(1-at^{m})*$ on $K_0(f)$ and $NK_0(f)$ when $a\in A$: send
$[P_1,\alpha,P_2]$ to $[F(P_1),F(\alpha),F(P_2)]$. It is clear 
from \eqref{seq:K0} that $(1-at^{m})*$ is compatible with the exact 
sequence \eqref{seq:NK0}. A priori, though, the maps $(1-at^{m})*$ 
do not fit together to make $NK_{0}(f)$ into a $W(A)$-module.
%

\begin{proposition}\label{ctn-W}
For any homomorphism $f:A\to B$,
$NK_0(f)$ is a continuous $W(A)$-module, and \eqref{seq:NK0}
is an exact sequence of continuous $W(A)$-modules.
\end{proposition}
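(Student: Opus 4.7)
My plan is to upgrade the compatible family of operators $(1-at^m)*$ already defined on $NK_0(f)$ to a genuine continuous $W(A)$-module structure, by combining Stienstra's theorem \cite{Jan} for the absolute groups $NK_*(A), NK_*(B)$ with the naturality of the functor $F=i_*\sigma^*i^*$ in $A$-algebras. Exactness of \eqref{seq:NK0} as a sequence of $W(A)$-modules is then immediate, since the excerpt has already observed that the maps in \eqref{seq:NK0} commute with the operators $(1-at^m)*$.

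First I would invoke Stienstra's theorem with $R=A$, applied separately to the two $A$-algebras $A$ and $B$: the groups $NK_*(A)$ and $NK_*(B)$ acquire continuous $W(A)$-module structures in which $(1-at^m)$ acts by $F$. Naturality of $F$ in $A$-algebra homomorphisms makes $f^*: NK_*(A)\to NK_*(B)$ a $W(A)$-linear map, so all four groups $NK_1(A), NK_1(B), NK_0(A), NK_0(B)$ flanking $NK_0(f)$ in \eqref{seq:NK0} are continuous $W(A)$-modules connected by $W(A)$-linear maps.

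To verify the $W(A)$-module axioms on $NK_0(f)$ itself, I would exploit the presentation of $K_0(f)$ by triples $[P_1,\alpha,P_2]$ introduced earlier in the section. Stienstra establishes each $W(A)$-module relation on $NK_*(A)$ via a natural isomorphism of additive endofunctors of $\bP(A[t])$; by $A$-algebra naturality, the same natural isomorphism applies on $\bP(B[t])$ and commutes with $f^*$. Applied to a triple $[P_1,\alpha,P_2]$, a natural isomorphism $\eta: F_{a,m}F_{b,n} \cong F_w$ yields compatible isomorphisms $\eta_{P_i}$ intertwining $F_{a,m}F_{b,n}(\alpha)$ with $F_w(\alpha)$; by relation (3) in the presentation the two triples $[F_{a,m}F_{b,n}(P_1), F_{a,m}F_{b,n}(\alpha), F_{a,m}F_{b,n}(P_2)]$ and $[F_w(P_1), F_w(\alpha), F_w(P_2)]$ represent the same class in $K_0(f)$. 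The same technique handles the additivity axioms (coming from products of power series in $W(A)$) and the continuity statement (by the analog of Stienstra's continuity argument applied triple by triple).

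The main obstacle is the bookkeeping required to verify that Stienstra's natural isomorphisms realizing each $W(A)$-module axiom on $\bP(A[t])$ are genuinely natural in $A$-algebra maps and carry over to $\bP(B[t])$ in a way that commutes with $f^*$. A structurally cleaner alternative---if one is willing to promote Stienstra's construction to the spectrum level---is to observe that $NK(f)\simeq\mathrm{hofib}(NK(A)\to NK(B))$ is the homotopy fiber of a $W(A)$-linear map of $W(A)$-module spectra, so that all its homotopy groups inherit canonical continuous $W(A)$-module structures, bypassing the explicit diagram chase through the triples presentation.
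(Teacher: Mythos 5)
There is a genuine gap in both routes you propose, and it is precisely the difficulty the text flags just before the proposition (``a priori, the maps $(1-at^m)*$ do not fit together to make $NK_0(f)$ into a $W(A)$-module''). Your first route rests on the claim that Stienstra establishes each $W(A)$-module relation on $NK_*(A)$ via a natural isomorphism of additive endofunctors of $\bP(A[t])$, which could then be transported to triples $[P_1,\alpha,P_2]$ by relation (3). That is not how those relations are proved. The operators $F=i_*\sigma^*i^*$ are only defined for the multiplicative generators $(1-at^m)$ of $W(A)$; the essential axiom is Witt-additivity, $\bigl((1-at^m)(1-bt^n)\bigr)*u=(1-at^m)*u+(1-bt^n)*u$, where the left-hand side must first be refactored as $\prod_k(1-c_kt^k)$ with complicated coefficients $c_k$. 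In \cite{Jan} and \cite{Wmod1} this identity is obtained by identifying the operators with multiplication by elements of $\End_0(A)$ under a bi-exact pairing and invoking Almkvist's theorem --- it is an identity of maps on $K$-groups, not a natural isomorphism between composites of the functors $F$. So there is no isomorphism of functors to feed into relation (3), and your argument assumes the very thing to be proved. Your spectrum-level alternative has the standard defect of homotopy fibers: self-maps of $NK(A)$ and $NK(B)$ commuting with $f^*$ only up to homotopy do not induce well-defined operators on $\pi_*$ of the fiber, so one would need a coherent (rigidified) module structure on the spectra, a substantial strengthening of Stienstra's results that you do not supply.

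The paper sidesteps all of this by reducing to the absolute case. Writing $B=\Z[X]/I$ for a polynomial ring $\Z[X]$ and forming the pullback $R=A\times_B\Z[X]$ with projection $\tilde f\colon R\to\Z[X]$, one has $NK_0(f)\cong NK_0(\tilde f)\cong NK_0(R)$, using $NK_*(\Z[X])=0$ and Excision (Lemma \ref{excision}). Thus $NK_0(f)$ \emph{is} an absolute group $NK_0(R)$, hence a continuous $W(R)$-module by \cite{Wmod1}. It then remains only to check that $(1-rt^m)$ acts as zero for $r\in I=\ker(R\to A)$ --- immediate since $A\otimes_RS=A[s]/(s^m)$, so the operator agrees with that of $(1-0t^m)$ --- whence the action descends to $W(A)=W(R)/W(I)$. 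If you want a proof along your lines, this pullback trick is the missing idea.
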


\begin{proof}
As in the proof of Proposition \ref{lambda-ops}, write $B=\Z[X]/I$,
where $\Z[X]$ is a polynomial ring. Let $R$ denote the pullback ring
$A\times_B\Z[X]$, and write $\tilde{f}:R\to\Z[X]$ for the quotient map.
Since $NK_*(\Z[X])=0$, we have 
$NK_n(\tilde{f})\cong NK_n(R)$ for all $n$.
Since $A=R/I$, Lemma \ref{excision} and \cite{Wmod1} imply that the 
groups $NK_0(f)\cong NK_0(\tilde{f})\cong NK_0(R)$ are continuous 
$W(R)$-modules.  

Since $W(A)=W(R)/W(I)$, where $W(I)=1+tI[[t]]$,
we are reduced to showing that $(1-rt^m)$ acts as zero on $K_0(f)$ 
whenever $r\in I$. When $r$ is in the kernel $I$ of $R\to A$, 
the ring $A\oo_RS$ is just $A[s]/(s^m)$, so $(1-rt^m)$ and $(1-0t^m)$
act identically on $K_0(f[t])$. This shows that $(1-rt^m)$ 
acts as zero on $K_0(f)$ and proves that the action of
$W(A)$ on $K_0(f)$ is well defined and continuous.
\end{proof}

\medskip

Applying $N$ to the determinant described in \eqref{eq:det},
we get an exact sequence
\[
0\to NSK_0(f)\to NK_0(f)\map{\det} N\Pic(f)\to 0.
\]
If $[P,\alpha,A[t]^n]$ is in $NK_0(f)$ then
$\det[P,\alpha,A[t]^n]=[\det(P),\det(\alpha),A[t]]$.

\begin{theorem}\label{NI}
For any homomorphism $f:A\to B$,
$N\Pic(f)$ is a continuous $W(A)$-module, and
$\det: NK_0(f)\to N\Pic(f)$ is a $W(A)$-module homomorphism.
\end{theorem}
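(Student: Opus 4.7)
The plan is to descend the $W(A)$-module structure from $NK_0(f)$ to $N\Pic(f)$ through the short exact sequence
\[
0 \to NSK_0(f) \to NK_0(f) \map{\det} N\Pic(f) \to 0
\]
noted just above the theorem. What must be shown is that the operators $(1-at^m)*$ built in Proposition \ref{ctn-W} preserve the subgroup $NSK_0(f)$. Given that, the quotient $W(A)$-action on $N\Pic(f)$ is automatic, continuity passes to quotients (any lift of $x\in N\Pic(f)$ to $NK_0(f)$ is killed by some $(1+t^N A[[t]])^\times$, so $x$ is too), and $\det$ is $W(A)$-linear by construction; this settles all three assertions at once.

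To verify stability of $NSK_0(f)$, I would follow the functor $F = i_*\sigma^*i^*$ used in the proof of Proposition \ref{ctn-W} through the determinant. Recall that $i:R[t] \hookrightarrow S[t]$ with $S = R[s]/(s^m-r)$ is finite free of rank $m$, and $\sigma$ is the $S$-algebra automorphism of $S[t]$ sending $t \mapsto st$. The classical determinant-transfer formula for a finite locally free morphism reads
\[
\det(i_*M) \;\cong\; N_{S[t]/R[t]}(\det M) \;\oo\; (\det i_*\cO_{S[t]})^{\mathrm{rk}(M)},
\]
where $N_{S[t]/R[t]}$ denotes the norm on Picard groups. Applied to a Bass triple $[P_1,\alpha,P_2]$ of common rank $n$, the rank-twist $(\det i_*\cO_{S[t]})^n$ appears identically on both ends of the triple and hence cancels in the formal sum. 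Since $\sigma^*$ and $i^*$ commute with $\det$ on the nose, this yields the identity $\det \circ F = G \circ \det$, where $G := N_{S[t]/R[t]}\circ\sigma^*\circ i^*$ is the corresponding self-map of the relative Picard group (defined at the sheaf level via norm maps on $\cO^\times$, which exist because $i$ is finite locally free). In particular $F$ carries $\ker(\det) = NSK_0(f)$ into itself, as required.

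The main technical obstacle is the determinant-transfer identity together with the observation that the rank-twist cancels within a Bass triple — this cancellation depends essentially on the equality of the ranks of the two modules composing a triple, and without it the descent would break. Everything else (the sheaf-theoretic construction of $G$ from norms on $\cO^\times$, the verification that the $G$'s satisfy the Witt-vector relations — inherited from $F$ via the surjection $\det$ — and the continuity claim) is formal. The statement that $\det$ is $W(A)$-linear is then just a rephrasing of the compatibility $\det\circ F = G \circ \det$.
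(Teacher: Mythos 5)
Your proposal is correct and follows essentially the same route as the paper: both reduce the theorem to showing that the operator $F=i_*\sigma^*i^*$ defining the $(1-at^m)$-action preserves $SK_0(f[t])=\ker(\det)$, and both establish this via the compatibility of $\det$ with the finite free transfer $i_*$ (the paper quotes Corollary 3.2 of \cite{DW} after normalizing triples to the form $[P,\alpha,A[t]^n]$, while you invoke the same determinant-of-transfer formula in its norm-plus-rank-twist form and cancel the twist across the triple). The remaining points — the quotient module structure, continuity, and $W(A)$-linearity of $\det$ — are handled formally in both arguments.
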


\begin{proof}
Since the group $NK_{0}(f)$ is a continuous $W(A)$-module by 
Proposition \ref{ctn-W}, it is enough to show that $NSK_{0}(f)$ is 
closed under multiplication by $W(A)$. Since every
element of $W(A)$ can be written as $\prod_{m>0}(1-a_{m}t^{m}),$
with $a_{m}\in A$, and for any element $u$ of $NK_0(f)$ there is an
$n$ so that $\prod_{m\ge n}(1-a_{m}t^{m})*u=0$, it is enough to show that 
$NSK_{0}(f)$ is closed under multiplication by $(1-at^{m})$ 
for any $a\in A$ and $m\ge1$.

It is enough to show that $F=i_*\sigma^*i^*$ sends $SK_{0}(f[t])$ to itself.
We now modify the argument of \cite[4.1]{DW}. 
Fix $u=[P,\alpha,A[t]^n]$ in $SK_0(f[t])$; 
By Remark \ref{[L,a]}, 
$\det(u)=0$ implies that $\det(P)=A[t]$ and $\det(\alpha)\in A$. 
By naturality of $\det$,
$\sigma^*i^*(u)=[P\oo S,\alpha\oo S, S[t]^n]$, $\det(P\oo S)=S[t]$,
$\det(\alpha\oo S)\in S$ and
$F(u)=[i_*(P\oo S),i_*(\alpha\oo S),A[t]^n]$.
By Corollary 3.2 of \cite{DW} applied to $A[t]\subset S[t]$,
$\det(i_*(P\oo S))=A[t]$ and $\det(\alpha\oo S)=\det(\alpha)^m\in A$, so 
$\det(F(u))=0$.
\end{proof}

\begin{corollary}
If ${\rm char}(A)=p$ then $N\Pic(f)$ is a $p$-group. \\
If ${\mathbb Q}\subseteq A$ then $N\Pic(f)$ is an $A$-module.
\end{corollary}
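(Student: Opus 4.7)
The plan is to extract both conclusions directly from Theorem \ref{NI}, which makes $M := N\Pic(f)$ into a continuous $W(A)$-module. The essential features I will rely on are: the additive group of $W(A)$ is $(1+tA[[t]])^\times$ under power-series multiplication; the ring identity $1_{W(A)}$ equals $1-t$; and continuity says that every $x\in M$ is annihilated by $(1+t^nA[[t]])^\times$ for some $n$ depending on $x$.

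For part (1), my strategy is purely formal: given $x\in M$, choose such an $n$ and then pick $k$ with $p^k\ge n$. Translating integer multiplication into the additive (i.e.\ multiplicative-power-series) structure gives $p^k\cdot 1_{W(A)} = (1-t)^{p^k}$, which in characteristic $p$ equals $1-t^{p^k}$ by the Frobenius identity. This element lies in $(1+t^{p^k}A[[t]])^\times \subseteq (1+t^nA[[t]])^\times$, and hence in the annihilator ideal of $x$, forcing $p^kx=0$. I do not anticipate any obstacle here.

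For part (2), the key task is to produce a unital ring homomorphism $\phi: A\to W(A)$ whenever $\mathbb{Q}\subseteq A$; once that exists, restricting the $W(A)$-action on $M$ along $\phi$ gives the desired $A$-module structure. I would define $\phi(a) = (1-t)^a := \exp\bigl(a\log(1-t)\bigr)$, a power series in $1+tA[[t]]$ that is well-defined precisely because the denominators $n!$ appearing in $\exp$ and $\log$ are invertible in $A$. Additivity $\phi(a+b) = \phi(a)\phi(b)$ in the group $(1+tA[[t]])^\times$ and the normalization $\phi(1)=1-t=1_{W(A)}$ are immediate from standard formal power-series identities.

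The step I expect to be the main obstacle is verifying $\phi(a)\cdot_W\phi(b)=\phi(ab)$ under the (far less transparent) Witt product. The cleanest route is to pass to ghost coordinates: a brief computation with $-t(d/dt)\log$ shows that $(1-t)^a$ has ghost vector $(a,a,a,\dots)$, and since Witt multiplication is componentwise on ghosts, both $\phi(a)\cdot_W\phi(b)$ and $\phi(ab)$ acquire ghost vector $(ab,ab,ab,\dots)$. Because $\mathbb{Q}\subseteq A$ the ghost map is injective, so these two elements coincide, finishing the construction of $\phi$ and hence the proof.
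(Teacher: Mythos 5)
Your proposal is correct and follows the same route as the paper: both deduce the corollary from the fact (Theorem \ref{NI}) that $N\Pic(f)$ is a continuous $W(A)$-module. The only difference is that the paper outsources the two general facts about continuous $W(A)$-modules to a citation of \cite[3.3]{Wmod1}, whereas you prove them directly --- via $(1-t)^{p^k}=1-t^{p^k}$ in characteristic $p$, and via the ring map $a\mapsto(1-t)^a$ checked on ghost coordinates --- and both of your verifications are sound.
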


\begin{proof}
Any continuous $W(A)$-module has these properties; see
\cite[3.3]{Wmod1}.
\end{proof}

\section{Sheaf properties of $N\Pic(f)$}

When $f: X\to S$ is a faithful affine morphism of schemes,
let $\cI(f)_\zar$ denote the Zariski sheaf
$f_*\cO_{X}^{\times}/\cO_{S}^{\times}$ on the category $Sm/S$
of smooth schemes over $S$; by \cite[4.4]{SW},
$\cI(f)_\zar$ is also an \'etale sheaf, and $H_\et^{0}(S,\cI(f)_\zar)=
H_\nis^{0}(S,\cI(f)_\zar)=\Pic(f)$. 
Our choice of $Sm/S$ is dictated by the need to not only include \'etale
extensions but be closed under product with $\A^1_S\smap{\pi} S$. 

Let $\pi^*\cI(f)$ denote the restriction of $\cI(f)_\zar$ 
to $Sm/\A^1_S$ along $\pi$. 
Its direct image $\pi_*(\pi^*\cI(f))$ is 
the Zariski sheaf $\cI(f)_\zar\oplus \cNI(f)$ on $Sm/S$,
where $\cNI(f)$ denotes the Zariski sheaf on $Sm/S$ associated to the presheaf 
$U\!\mapsto\!N\Pic(f\!\times_S\!U).$ 

\begin{theorem}\label{H^0}
Let $f: X\to S$ be a faithful affine morphism of schemes. Then
$\cNI(f)$ is an \'etale sheaf on $S$. Moreover,
\[
H_\et^{0}(S, \cNI(f))= H_\zar^{0}(S,\cNI(f))=N\Pic(f).
\]
\end{theorem}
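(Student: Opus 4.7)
The plan is to use the decomposition of Zariski sheaves
\[
\pi_*\pi^*\cI(f) \cong \cI(f)_\zar \oplus \cNI(f)
\]
on $Sm/S$, stated in the paragraph preceding the theorem, together with the result from \cite[4.4]{SW} that $\cI(f)_\zar = f_*\cO^\times_X/\cO^\times_S$ is already an \'etale sheaf with $H^0_\zar(S,\cI(f)_\zar) = \Pic(f)$. Once I verify that $\pi_*\pi^*\cI(f)$ is \'etale, the summand $\cNI(f)$ is \'etale automatically, and the $H^0$ identification drops out by taking global sections of the decomposition.

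To prove $\pi_*\pi^*\cI(f)$ is \'etale, I would first argue that $\pi^*\cI(f)$ is an \'etale sheaf on $Sm/\A^1_S$: since $\pi^*\cI(f)$ is by definition the restriction of $\cI(f)_\zar$ along $\pi$, any \'etale cover $\{V_i\to V\}$ in $Sm/\A^1_S$ is also an \'etale cover of $V$ in $Sm/S$, so \'etale descent for $\cI(f)_\zar$ transfers to $\pi^*\cI(f)$. Next, $\pi_*$ preserves the \'etale sheaf property: for any \'etale cover $\{U_i\to U\}$ in $Sm/S$, the base change $\{\A^1_{U_i}\to\A^1_U\}$ is an \'etale cover in $Sm/\A^1_S$, and the descent condition for $\pi^*\cI(f)$ against these covers is precisely the descent condition for $\pi_*\pi^*\cI(f)$ against $\{U_i\to U\}$. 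Hence $\pi_*\pi^*\cI(f)$ is \'etale. Since the category of \'etale sheaves is closed under direct summands inside Zariski sheaves (given descent data for $\cNI(f)$, pair it with the zero descent datum for $\cI(f)_\zar$, glue in the direct sum, and observe that the first component of the glued section must vanish by descent for $\cI(f)_\zar$), the complement $\cNI(f)$ of the already-\'etale summand $\cI(f)_\zar$ must itself be \'etale.

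For the cohomology statement, I would take global sections of the decomposition. Using the tautological identification $\pi^*\cI(f) \cong \cI(f[t])_\zar$ on $Sm/\A^1_S$ (both evaluate a smooth $V\to\A^1_S$ to $\cI(f|_V)$, since $X[t]\times_{\A^1_S} V = X\times_S V$), and \cite[4.4]{SW} applied to the faithful affine map $f[t]$, we get $H^0_\zar(S,\pi_*\pi^*\cI(f)) = \pi^*\cI(f)(\A^1_S) = \Pic(f[t])$. Splitting this via the decomposition and recalling that $H^0_\zar(S,\cI(f)_\zar) = \Pic(f)$, the complement is $H^0_\zar(S,\cNI(f)) \cong \Pic(f[t])/\Pic(f) = N\Pic(f)$. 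The equality $H^0_\et(S,\cNI(f)) = H^0_\zar(S,\cNI(f))$ is then automatic from the first part, since the global-sections functor on a fixed sheaf is independent of the topology. The main obstacle is bookkeeping the identification $\pi^*\cI(f)\cong\cI(f[t])_\zar$ so that the splitting is consistent with the definition of $N\Pic$; everything else is formal manipulation given the contracted-functor structure from \cite{SW}.
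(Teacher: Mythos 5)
Your proposal is correct and follows essentially the same route as the paper's proof: both rest on the decomposition $\pi_*\pi^*\cI(f)\cong\cI(f)_\zar\oplus\cNI(f)$, the fact from \cite[4.4]{SW} that $\cI(f)_\zar$ is an \'etale sheaf (applied also to $f[t]$), preservation of the \'etale sheaf property under $\pi_*$, and the computation $H^0(S,\pi_*\pi^*\cI(f))=\Pic(f[t])=\Pic(f)\oplus N\Pic(f)$. You merely supply more detail than the paper does on why the complementary summand of an \'etale sheaf is \'etale and on the identification $\pi^*\cI(f)\cong\cI(f[t])_\zar$; these are correct elaborations, not a different argument.
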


\begin{proof}
Since $\pi^*\cI(f)$ is an \'etale sheaf on $\A^1_S$, its direct image
$\pi_*\pi^*\cI(f)$ is an \'etale sheaf on $S$; since
$\pi_*\pi^*\cI(f) \cong\cI(f)_\zar\oplus\cNI(f)$, 
$\cNI(f)$ is also an \'etale sheaf. Since
\[
H^0_\et(S,\pi_*\pi^*\cI(f)) = H^0_\et(\A^1_S,\pi^*\cI(f)) = \Pic(f[t])
= \Pic(f)\oplus N\Pic(f),
\]
we see that $H_\et^{0}(S, \cNI(f))=N\Pic(f)$.
If $S_s$ is a Zariski local scheme of $S$, this shows that the stalk 
$\cNI(f)_s=H^0_\zar(S_s,\cNI(f))$ equals $H^0_\et(S_s,\cNI(f))$.
\end{proof}

\begin{example}\label{snormal}
If $f$ is seminormal, the sheaf $\cNI(f)$ vanishes and $N\Pic(f)=0$.
This follows from Theorem \ref{H^0} and \cite[1.5]{ss}, 
which states that $N\Pic(A,B)=0$ when $A$ is seminormal in $B$.
\end{example}

We now modify an argument of Vorst \cite{Vorst} and van der Kallen \cite{vdk}.
Suppose that $\Spec(A)=\bigcup_{i=0}^{r} U_i$, where $U_i=\Spec(A_{s_{i}})$.
Given a presheaf $F$ of abelian groups on $\Spec(A)$,
we write $C^\mathdot(\{U_i\},F)$ for the augmented \v Cech complex:
\[
0\to F(A)\map{\epsilon} \prod_{i=0}^r F(A_{s_{i}})\to 
\prod_{0\le i< j\leq r} F(A_{s_{i}s_{j}})\to \dots 
\to F(A_{s_{0}s_{1}\cdots s_r})\to 0.
\]
Given $s\in A$, we have an $A$-algebra map $\sigma:A[x]\to A[x]$
determined by $\sigma(x)=sx$. We
write $NF(A)_{[s]}$ for the direct limit of 
$F(A[x])\smap{\sigma}F(A[x])\smap{\sigma}\cdots$.
Suppose that for all $0\leq i_{0}<i_{1}<\dots<i_{p}\leq r$ and $j\leq p$: 
\begin{equation}\label{NPIC}
NF(A_{s_{i_0}\cdots s_{i_j}\cdots s_{i_p}}[x]) \cong 
NF(A_{s_{i_0}\cdots \hat{s}_{i_j}\cdots s_{i_p}}[x])_{[s_{i_j}]}.
\end{equation}
In this situation, Vorst proved \cite[1.2]{Vorst} that
the sequence $C^\mathdot(\{U_i\},NF)$ is always exact.
He also proved that $F=NK_n$ satisfied \eqref{NPIC}, so that
$C^\mathdot(\{U_i\},NK_n)$ is exact for all $n$.
(See \cite[1.4]{Vorst} or \cite[V.8.5]{WK}; 
the nonzerodivisor hypothesis is unnecessary by \cite{TT}.)

\begin{remark}\label{cech-NU}
It is easy to see (and follows from Vorst's result \cite[1.2]{Vorst}) that
the functor $NU(A)=(A[t])^\times/A^\times$ satisfies \eqref{NPIC}. From the
exact sequence of complexes
\[
0 \to C^\mathdot(\{U_i\},NU) \to C^\mathdot(\{U_i\},NU(-\otimes_AB)) \to
C^\mathdot(\{U_i\}, NU(-\otimes_AB)/NU) \to 0
\]
we see that $C^\mathdot(\{U_i\},F)$ is also exact
for the functor $F(A_s)=NU(B_s)/NU(A_s)$.
\end{remark}

\begin{lemma}\label{cech-A}
$C^\mathdot(\{U_i\},N\Pic)$ 
is always an exact sequence.
\end{lemma}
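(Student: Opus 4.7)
The strategy is to verify Vorst's hypothesis \eqref{NPIC} for the presheaf $F$ defined on $A$-algebras by $F(R) = \Pic(R \to R \otimes_A B)$, and then to invoke Vorst's theorem \cite[1.2]{Vorst} to deduce the exactness of $C^\mathdot(\{U_i\}, NF) = C^\mathdot(\{U_i\}, N\Pic)$.

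First I would record the five-term exact sequence of presheaves on flat $A$-algebras, derived from Bass's sequence \eqref{seq:Pic} by applying the exact functor $N$ (exact because $NF$ is a split direct summand of $F(-[t])$, the splitting coming from $t \mapsto 0$):
\[
0 \to NU \to NU(-\otimes_A B) \to N\Pic(f) \to N\Pic \to N\Pic(-\otimes_A B).
\]
Left injectivity uses $R \hookrightarrow R \otimes_A B$ for the flat rings $R$ relevant to \eqref{NPIC}, namely $R = A_{s_{i_0}\cdots s_{i_p}}[x]$ and its localizations.

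Next I would verify \eqref{NPIC} for the four outer presheaves. For $U$ this is Remark \ref{cech-NU}, and for $U(-\otimes_A B)$ it follows by applying the same remark on $\Spec B$. For absolute $\Pic$ (and analogously $\Pic(-\otimes_A B)$), the condition follows from the fact that $\Pic$ preserves filtered colimits of commutative rings---a standard consequence of line bundles being finitely presented. The natural localization maps assemble into a commutative comparison diagram between the two sides of \eqref{NPIC} for the entire five-term exact sequence, with four isomorphic outer vertical arrows; the 5-lemma then forces the middle arrow to be an isomorphism, establishing \eqref{NPIC} for $F = \Pic(f)$.

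Vorst's theorem then gives the exactness of $C^\mathdot(\{U_i\}, N\Pic)$. The main obstacle will be the careful verification of \eqref{NPIC} for absolute $\Pic$: one must identify the direct limit of the $\sigma$-system with the appropriate localization of the ring and confirm that $\Pic$ commutes with this specific filtered colimit. The remaining steps---setting up the five-term exact sequence, exactness of $N$, and applying the 5-lemma---are essentially formal.
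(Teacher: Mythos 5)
The crux of Lemma \ref{cech-A} is precisely the verification of Vorst's condition \eqref{NPIC} for the absolute functor $\Pic$, and this is where your argument has a genuine gap. Continuity of $\Pic$ (commuting with filtered colimits of rings) does not yield \eqref{NPIC}: the colimit of the ring system $A[x]\smap{\sigma}A[x]\smap{\sigma}\cdots$ with $\sigma(x)=sx$ is not $A_s[x]$ but (for $s$ a nonzerodivisor) the subring $A+xA_s[x]$ of $A_s[x]$, whose degree-zero part is still $A$. Thus continuity only identifies $N\Pic(A)_{[s]}$ with the relative group $\Pic(A+xA_s[x],\,xA_s[x])$, and one still has to compare this with $\Pic(A_s[x],\,xA_s[x])=N\Pic(A_s)$ --- an excision statement for relative $\Pic$ that requires a Units--Pic Mayer--Vietoris argument for the conductor square with corners $A+xA_s[x]$, $A_s[x]$, $A$, $A_s$ (and a separate treatment when $s$ is a zerodivisor). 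The isomorphism $N\Pic(A_s)\cong N\Pic(A)_{[s]}$ is exactly Theorem 4.2 of \cite{wei}; the paper's proof of Lemma \ref{cech-A} consists of citing it and then invoking Vorst \cite[1.2]{Vorst}, whereas your proposal in effect assumes the hard part.

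A secondary point: Lemma \ref{cech-A} concerns the absolute presheaf $N\Pic$ only; the relative statement for $F(R)=\Pic(R\to R\otimes_AB)$ is the subsequent Lemma \ref{cech}, so your identification of $NF$ with $N\Pic$ conflates the two. Your five-term-sequence/5-lemma architecture is essentially the paper's proof of that later lemma, except that the paper does not verify \eqref{NPIC} for the relative functor at all: it splices the \v Cech complexes of $NU(B)/NU(A)$, $N\Pic$ and $N\Pic(B)$ together, and to control the failure of surjectivity of $N\Pic(A)\to N\Pic(B)$ it first reduces to the case where $B$ is subintegral over $A$ (via the seminormalization and Example \ref{snormal}), where Ischebeck's theorem makes that map onto. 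If you wish to keep your route, you must either carry out the excision step above honestly or replace it with the citation of \cite[Thm.~4.2]{wei}.
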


\begin{proof}
By Theorem 4.2 of \cite{wei}, given $s\in A$ we have 
$N\Pic(A_{s})\cong N\Pic(A)_{[s]}$ and hence
$N\Pic(A_{s}[x])\cong N\Pic(A[x])_{[s]}$. This implies that 
$N\Pic$ satisfies \eqref{NPIC}.  Vorst's result shows that
$C^\mathdot(\{U_i\},N\Pic)$ is an exact sequence.
\end{proof}

We apply these considerations to the presheaf 
$N\Pic(f):U\mapsto N\Pic(f|_U)$.

\begin{lemma}\label{cech}
Suppose that $\Spec(A)=\cup_{i=0}^{n} U_i$, where $U_i=\Spec(A_{s_{i}})$.
If $f:A\hookrightarrow B$ is a ring extension, 
the complex $C^\mathdot(\{U_i\},N\Pic(f))$ is exact.
\[
0\to N\Pic(A, B)\to \prod_{i=0}^{n} N\Pic(A_{s_i}, B_{s_i})\to 
\prod_{i_i<i_2} N\Pic(A_{s_{i_1}s_{i_2}}, B_{s_{i_1}s_{i_2}})\to \cdots 
\]
\end{lemma}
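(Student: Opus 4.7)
The plan is to exploit the naturality of Bass' exact sequence \eqref{seq:Pic}. Taking its ``nil'' quotient (which is exact because the $t=0$ section splits Bass' sequence) produces a five-term exact sequence of presheaves on $\Spec(A)$:
\[
NU(-) \to NU(B\otimes_A -) \to N\Pic(f) \to N\Pic(-) \to N\Pic(B\otimes_A -),
\]
where $N\Pic(f)$ is the presheaf $C\mapsto N\Pic(C, B\otimes_A C)$ introduced above. Evaluated on a distinguished open $\Spec(A_s)$, this is precisely the $N$-ification of \eqref{seq:Pic} for the extension $A_s\subset B_s$.

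Next I would apply the augmented \v{C}ech complex functor $C^\mathdot(\{U_i\},-)$, which is exact since finite products are exact, to obtain an exact sequence of five complexes. Each of the four outer \v{C}ech complexes is acyclic: for $NU(-)$ and $NU(B\otimes_A -)$, by Vorst's Theorem 1.2 of \cite{Vorst} (cf.\ Remark \ref{cech-NU}); and for $N\Pic(-)$ and $N\Pic(B\otimes_A -)$, by Lemma \ref{cech-A}. Here one observes that since $s_0,\ldots,s_n$ generate the unit ideal in $A$, their images generate it in $B$, so $\{\Spec(B_{s_i})\}$ is a genuine \v{C}ech cover of $\Spec(B)$ to which Vorst's result and Lemma \ref{cech-A} apply verbatim.

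A standard double-complex argument then concludes the proof. In the bicomplex whose five columns are the augmented \v{C}ech complexes of the five presheaves above, the horizontal cohomology vanishes by exactness of the presheaf sequence on each intersection, so the total complex is acyclic; the vertical cohomology, concentrated in the middle column by the acyclicity of the four outer columns, must therefore also vanish. The middle column is $C^\mathdot(\{U_i\}, N\Pic(f))$, giving the desired exactness.

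The main subtle point I anticipate is verifying that the five-term presheaf sequence remains exact when evaluated on each intersection $\Spec(A_{s_{i_0}\cdots s_{i_p}})$; but this is automatic, since localization commutes with $N$, $U$, and $\Pic$, and Bass' sequence \eqref{seq:Pic} is natural in the ring extension. With this in hand, the homological mechanics are routine.
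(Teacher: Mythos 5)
Your overall strategy --- d\'evissage of $N\Pic(f)$ via the $N$-ified Bass sequence, combined with Vorst-type acyclicity of the outer \v{C}ech complexes --- is close to what the paper does, but your double-complex argument has a genuine gap at the right-hand end of the five-term sequence. The sequence
\[
NU(-) \to NU(B\otimes_A -) \to N\Pic(f) \to N\Pic(-) \to N\Pic(B\otimes_A -)
\]
is exact only at the three middle spots (and at the first spot, since $NU(A_s)\to NU(B_s)$ is injective); it is \emph{not} exact at $N\Pic(B\otimes_A-)$, where the cohomology is $\coker\bigl(N\Pic(A_s)\to N\Pic(B_s)\bigr)$, which is nonzero in general (take $A$ seminormal and $B$ not). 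So the rows of your bicomplex have nonvanishing horizontal cohomology, the total complex is not acyclic, and the two spectral sequences only give $H^q(C^\mathdot(\{U_i\},N\Pic(f)))\cong H^{q+2}(C^\mathdot(\{U_i\},\coker))$ rather than vanishing. Equivalently: splitting the five-term sequence into short exact sequences yields $0\to F\to N\Pic(f)\to G\to 0$ with $F=NU(B\otimes_A-)/NU(-)$ and $G=\ker(N\Pic(-)\to N\Pic(B\otimes_A-))$; the \v{C}ech complex of $F$ is acyclic by Remark \ref{cech-NU}, but acyclicity of $C^\mathdot(\{U_i\},G)$ does \emph{not} follow from acyclicity of the \v{C}ech complexes of $N\Pic(-)$ and $N\Pic(B\otimes_A-)$ alone, since the kernel of a map of acyclic complexes need not be acyclic unless the map is termwise surjective.

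The paper closes exactly this gap by a preliminary reduction. Letting ${}^{+}\!A$ be the seminormalization of $A$ in $B$, the exact sequence $1\to N\Pic(A,{}^{+}\!A)\to N\Pic(A,B)\to N\Pic({}^{+}\!A,B)\to 1$ of \cite[Prop.\,4.1]{vs}, together with the vanishing $N\Pic({}^{+}\!A,B)=0$ (Example \ref{snormal}), identifies $N\Pic(A,B)$ with $N\Pic(A,{}^{+}\!A)$ compatibly with localization, so one may assume $A\subset B$ is subintegral. In that case Ischebeck's theorem gives that $N\Pic(A_s)\to N\Pic(B_s)$ is surjective for every $s$, so $C^\mathdot(\{U_i\},N\Pic)\to C^\mathdot(\{f^{-1}U_i\},N\Pic)$ is a termwise surjection of acyclic complexes, its kernel $C^\mathdot(\{U_i\},G)$ is acyclic, and the short exact sequence $0\to F\to N\Pic(f)\to G\to 0$ finishes the proof. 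You would need to add this reduction (or some other control on the cokernel of $N\Pic(-)\to N\Pic(B\otimes_A-)$) to make your argument complete.
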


\begin{proof}
Let $^{^+}\!\!A$ denote the subintegral closure of $A$ in $B$, so 
$^{^+}\!\!A$ is seminormal in $B$ and we have
$A\subset {}^{^+}\!\!A \subset B$.  By \cite[Prop.\,4.1]{vs},
we have an exact sequence
\[
1\to N\Pic(A,^{^+}\!\!\!A ) \to N\Pic(A, B) \to N\Pic(^{^+}\!\!A,B) \to 1.
\] 
By Example \ref{snormal}, 
the third term vanishes and we have
$N\Pic(A,^{^+}\!\!\!A )\cong N\Pic(A,B)$. Thus we may assume that $B$
is subintegral over $A$. In this case, Ischebeck proved
\cite[Prop.\,7]{is} that $N\Pic(A)\to N\Pic(B)$ is surjective. Now the 
result follows from Remark \ref{cech-NU}, Lemma \ref{cech-A} and the
long exact cohomology sequences associated to 
\begin{align*}
0\to C^\mathdot(\{U_{i}\},F)\to C^\mathdot(\{U_{i}\},N\Pic(f))&
\to C^\mathdot(\{U_{i}\},N\Pic(f)/F)\to0, \\
0\to C^\mathdot(\{U_{i}\},N\Pic(f)/F)\to C^\mathdot(\{U_{i}\},N\Pic)&
\to C^\mathdot(\{f^{-1}(U_{i})\},N\Pic)\to 0.
\qedhere\end{align*}
\end{proof}

\begin{theorem}\label{quasi}
 Let $f:A\hookrightarrow B$ be an extension of rings. 
Then:
$$H_\et^{q}(\Spec(A), \cNI)=\begin{cases}
  N\Pic(f) & {\rm if}~ q=0\\   0 & {\rm if}~ q> 0 \end{cases}$$
%
\end{theorem}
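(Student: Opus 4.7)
The $q=0$ statement is Theorem \ref{H^0}; my task is to prove $H^q_\et(\Spec(A), \cNI) = 0$ for $q > 0$. The central input is the \v Cech acyclicity of Lemma \ref{cech}, and the strategy is to compute Zariski cohomology by \v Cech cohomology with principal affine covers, then transfer to the \'etale topology.

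I would first establish Zariski vanishing. The principal affine opens $U_s = \Spec(A_s)$ form a basis for the Zariski topology of $\Spec(A)$, closed under finite intersection, and Theorem \ref{H^0} applied to each localized extension $A_r \hookrightarrow B_r$ identifies the sheaf value $\cNI(U_r)$ with $N\Pic(A_r, B_r)$. Lemma \ref{cech} then asserts that the \v Cech complex $C^\mathdot(\{U_{rs_i}\}, \cNI)$ is exact in positive degrees, for any principal cover of any principal open. By the standard comparison theorem identifying \v Cech and sheaf cohomology under such basis acyclicity (Godement), this gives $H^q_\zar(\Spec(A), \cNI) = 0$ for $q > 0$.

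For the passage to \'etale cohomology, I would invoke the Leray spectral sequence for the change-of-sites morphism $\pi: \Spec(A)_\et \to \Spec(A)_\zar$:
\[
H^p_\zar(\Spec(A), R^q\pi_*\cNI) \Longrightarrow H^{p+q}_\et(\Spec(A), \cNI).
\]
Because $\cNI$ is already an \'etale sheaf (Theorem \ref{H^0}), $\pi_*\cNI = \cNI$, so the $q=0$ row of the $E_2$-page is handled by the Zariski vanishing above. To kill the higher rows, observe that the stalk of $R^q\pi_*\cNI$ at a Zariski point $\mathfrak p$ is $\operatorname{colim}_{\mathfrak p \in U} H^q_\et(U, \cNI)$, which I would evaluate by rerunning the \v Cech argument on the local extension $A_{\mathfrak p} \hookrightarrow B \otimes_A A_{\mathfrak p}$, where Lemma \ref{cech} applies verbatim.

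The main obstacle is rigorously linking \v Cech acyclicity for principal-open covers to vanishing of sheaf cohomology in the \'etale topology, since \'etale covers need not refine Zariski covers. My proposed workaround computes the higher direct images $R^q\pi_*\cNI$ stalkwise at Zariski points, reducing everything to the Zariski computation on localized affine schemes where Lemma \ref{cech} applies directly.
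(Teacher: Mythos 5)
The $q>0$ case is where your argument diverges from the paper's, and it is where it breaks down. The paper does not pass through Zariski cohomology at all: it applies Cartan's criterion \cite[III.2.17]{Milne} directly on the \'etale site, using the \v Cech acyclicity supplied by Lemma \ref{cech} to identify $H^q_\et(\Spec(A),\cNI)$ with $\check{\mathrm{H}}^q(\Spec(A),\cNI)=0$. The comparison $H^*_\et\cong H^*_\zar$ is then deduced \emph{from} Theorem \ref{quasi} (this is Corollary \ref{et=zar}), because the \'etale vanishing on the affine local schemes is exactly what kills the higher direct images under the change of sites. You are attempting to run that implication in the opposite direction, and the result is circular.

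Concretely: your first step (Zariski vanishing from Lemma \ref{cech} plus the \v Cech-to-derived comparison for a basis of principal opens) is fine. But in your Leray step you must show $R^q\pi_*\cNI=0$ for $q>0$, and the stalk of this sheaf at a Zariski point $\mathfrak p$ is $H^q_\et(\Spec(A_{\mathfrak p}),\cNI)$ --- a genuinely \'etale cohomology group of a local ring. Applying Lemma \ref{cech} ``verbatim'' to $A_{\mathfrak p}\hookrightarrow B\otimes_A A_{\mathfrak p}$ only gives exactness of \v Cech complexes for covers of $\Spec(A_{\mathfrak p})$ by principal opens; since $A_{\mathfrak p}$ is local, every such cover is refined by the trivial one, so the statement is vacuous and says nothing about \'etale covers, of which a local ring has plenty. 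Your proposed evaluation of the stalk is therefore not a reduction but a restatement of the theorem for the localized extension, and the flagged ``obstacle'' (\'etale covers do not refine Zariski covers) is not actually circumvented. The missing ingredient is precisely the one the paper invokes: a \v Cech-to-derived-functor comparison valid on the \'etale site itself (Cartan's criterion), which lets the acyclicity of Lemma \ref{cech} bear directly on $H^q_\et$ without any intermediate passage through $H^q_\zar$ and $R^q\pi_*$.
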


\begin{proof}
The case $q=0$ is given by Theorem \ref{H^0}.
By Lemma \ref{cech}, the $\rm{\check{C}ech}$ cohomology groups
$\check{\mathrm{H}}^q(\Spec(A),\cNI)$ vanish for $q>0$. 
Using the Cartan criterion \cite[III.2.17]{Milne}, 
$H_\et^{q}(\Spec(A),\cNI)$ equals 
$\check{\mathrm{H}}^q(\Spec(A),\cNI)=0$ for $q>0$.
\end{proof}

%

\begin{corollary}\label{et=zar}
 Let $f: X\to S$ be a faithful affine morphism of schemes. Then 
\[
H_\et^{*}(S, \cNI)\cong H_\zar^{*}(S, \mathcal{NI)}.
\]
\end{corollary}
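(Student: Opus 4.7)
The plan is to deduce the corollary from Theorem \ref{quasi} by means of the Leray spectral sequence for the canonical morphism of sites $\tau:S_\et\to S_\zar$. Concretely, I would consider
\[
E_2^{p,q}=H_\zar^p(S,R^q\tau_*\cNI)\Longrightarrow H_\et^{p+q}(S,\cNI)
\]
and reduce the problem to verifying that $R^q\tau_*\cNI=0$ for $q>0$ and that $\tau_*\cNI=\cNI$ as Zariski sheaves on $S$.

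For the second identification, since $\cNI$ was defined in Theorem \ref{H^0} simultaneously as a Zariski and as an \'etale sheaf whose stalks agree, the tautology $\tau_*\cNI(U)=\cNI(U)$ for $U$ Zariski open gives $\tau_*\cNI=\cNI$ immediately. For the vanishing of higher direct images, recall that $R^q\tau_*\cNI$ is the Zariski sheaf associated to the presheaf $U\mapsto H_\et^q(U,\cNI)$, so it suffices to check vanishing on a basis of $S_\zar$. Take an affine open $U=\Spec(A)\subset S$; because $f$ is affine, $f^{-1}(U)=\Spec(B)$ is also affine, and because $f$ is faithful, $A\hookrightarrow B$ is an injection of rings. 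Theorem \ref{quasi} applied to this extension then gives $H_\et^q(U,\cNI)=0$ for $q>0$, so $R^q\tau_*\cNI$ vanishes after sheafification.

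With these two facts the spectral sequence collapses onto the row $q=0$, yielding the edge isomorphism
\[
H_\et^p(S,\cNI)\;\cong\;H_\zar^p(S,\tau_*\cNI)\;=\;H_\zar^p(S,\cNI)
\]
for every $p$, which is the corollary.

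The only real point to check is that restriction of $\cNI$ to an affine $U=\Spec(A)$ with $f^{-1}(U)=\Spec(B)$ genuinely coincides with the sheaf $\cNI$ attached to the extension $A\hookrightarrow B$ (so that Theorem \ref{quasi} is applicable). This follows because both are determined by their stalks at Zariski (equivalently \'etale) points of $U$, and those stalks are described uniformly by $N\Pic$ of local extensions. This is the step most likely to need care, but no new input is required beyond Theorem \ref{H^0} together with the affineness of $f$.
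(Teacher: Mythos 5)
Your proposal is correct and is essentially the paper's own argument: both use the Leray spectral sequence for the site change $\tau:S_\et\to S_\zar$, with the vanishing of $R^q\tau_*\cNI$ for $q>0$ supplied by Theorem \ref{quasi} applied on affine opens (where faithfulness and affineness of $f$ produce a ring extension $A\hookrightarrow B$). You merely spell out the reduction to a basis of affine opens and the identification $\tau_*\cNI=\cNI$, which the paper leaves implicit.
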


\begin{proof}
Consider the site change map $\tau: S_\et\to S_\zar$. Then by
Theorem \ref{quasi}, the higher direct image sheaves
$R^{q}\tau_{*}\,\cNI$ vanish for $q> 0$. Therefore the Leray spectral
sequence degenerates, yielding the result.
\end{proof}

\begin{remm}
More generally, if $f:X\to S$ is any morphism of schemes then $\cO_S^\times$
may not inject into $f_*\cO_X^\times$. In this case, if we interpret
$f_*\cO_X^\times/\cO_S^\times$ as the mapping cone of 
$\cO_S^\times\to f_*\cO_X^\times$ (a complex of Zariski sheaves)
and use sheaf hypercohomology, then Theorem \ref{H^0} remains valid.
However, Theorem \ref{quasi} may fail in this setting.
\end{remm}


\section{Module Structures on $NK_n(f)$}\label{sec:module}

Given an exact functor $F:\mathcal P\to\mathcal Q$, the 
relative $K$-theory groups $K_n(F)$ fit into an exact sequence
\[
\cdots \map{F} K_{n+1}\mathcal Q \map{\partial} K_n(F) \to
K_n\mathcal P \map{F} K_n\mathcal Q \map{\partial} \cdots
\]
ending in $K_0\mathcal Q \map{\partial} K_{-1}(F)$.
Waldhausen showed that the $K_n(F)$ are the homotopy groups 
$\pi_{n+2}|wS_\mathdot(S_\mathdot F)|$ ($n\ge0$), 
where $S_nF$ denotes the category of pairs 
$$
(P_*,Q_*) = 
(P_1\cof P_2\cof\cdots\cof P_n, Q_0\cof Q_1\cof\cdots\cof Q_n)
$$
($P_i\in\mathcal P$ and $Q_j\in\mathcal Q$), together with choices of 
$Q_i/Q_j$ for $i>j$, such that $F(P_*)$ is 
$Q_1/Q_0\cof\cdots\cof Q_n/Q_0$.
(See \cite[1.5.4--7]{Wald} or \cite[IV.8.5.3]{WK}.)
 
\begin{example}\label{ex:f[t,1/t]}
If $A$ is a ring, we write $\bP(A)$ for the category of 
finitely generated projective $A$-modules. Given a ring homomorphism
$f:A\to B$, we have an exact functor $\bP(f): \bP(A)\to\bP(B)$;
by abuse, we write $K_*(f)$ for $K_*\bP(f)$.  Writing $f[t]$ for
$A[t]\to B[t]$, we have $K_*(f[t])=K_*(f)\oplus NK_*(f)$.
The Fundamental Theorem of $K$-theory easily extends to 
the relative setting, yielding 
\[
K_*(f[t,1/t])\cong K_*(f)\oplus NK_*(f)\oplus NK_*(f)\oplus K_{*-1}(f).
\]
\end{example}

Let $A$ be a commutative ring. As in \cite{WK}, we write
$\bEnd(A)$ for the category of pairs $(P,\alpha)$, where $P$ in $\bP(A)$
and $P\smap{\alpha}P$ is an endomorphism, and write
${\bNil}(A)$ for the full subcategory of $\bEnd(A)$ consisting of
all $(P,\alpha)$ with $\alpha$ nilpotent. As pointed out in
\cite[II.7.4]{WK}, $K_*\bEnd(A)\cong K_*(A)\oplus \End_*(A)$ and
$K_*\bNil(A)\cong K_*(A)\oplus \Nil_*(A)$, where
$\End_*(A)$ is a graded-commutative ring and $\Nil_*(A)$ is a graded
$\End_*(A)$-module.
By naturality, the exact functors $\bNil(f):\bNil(A)\to\bNil(B)$
yield relative groups
$K_*\bNil(f)\cong K_*(f)\oplus \Nil_*(f)$.

The category $\bNil(A)$ is equivalent to the category 
${\bf H}_{1,t}(A[t])$ of $t$-primary torsion $A[t]$-modules $M$ 
with $pd_{A[t]}M=1$. Specifically, if $(P,\nu)$ is in $\bNil(A)$, 
and we write $P_\nu$ for the $A[t]$-module $P$ on which $t$ acts 
as $\nu$, then $P_\nu$ has
projective dimension~1 over $A[t]$.
The Fundamental Theorem
(\cite[V.8.2]{WK}) implies that $\Nil_n(A)\cong NK_{n+1}(A)$.
We also have $K\bP(A[t])\cong K{\bf H}(A[t])$ 
(see e.g., \cite[V.3.2]{WK}).

\begin{proposition}\label{Nil=NK}
There is a natural isomorphism $\Nil_n(f) \cong NK_{n+1}(f)$.
\end{proposition}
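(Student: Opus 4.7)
My plan is to derive the relative isomorphism from the absolute one by taking homotopy fibers in a functorial $K$-theory fibration.

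First, I would recall how $\Nil_n(A)\cong NK_{n+1}(A)$ arises in the absolute case. The categorical equivalence $\bNil(A)\simeq\mathbf{H}_{1,t}(A[t])$ noted in the paper, combined with the Resolution Theorem (giving $K\mathbf{H}_1(A[t])\simeq K(A[t])$) and the Localization Theorem for $A[t]\to A[t,1/t]$, assembles into a homotopy fibration of $K$-theory spectra
\[
K\bNil(A) \to K(A[t]) \to K(A[t,1/t]).
\]
On $\pi_*$, combined with the decompositions $K_*\bNil(A)\cong K_*(A)\oplus\Nil_*(A)$, $K_*(A[t])\cong K_*(A)\oplus NK_*(A)$, and the absolute Fundamental Theorem for $K_*(A[t,1/t])$, the connecting map produces the isomorphism $\Nil_n(A)\cong NK_{n+1}(A)$ as its off-diagonal component.

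Next, I would apply this construction functorially to $f:A\to B$. Because each ingredient (the equivalence, the resolution, and the localization) is natural in the ring, we obtain a commutative ladder of absolute fibration sequences with $A$ on top and $B$ on the bottom. Taking vertical homotopy fibers---which is precisely how the relative $K$-theory spectra $K\bNil(f)$, $K(f[t])$, and $K(f[t,1/t])$ are defined in the paper's Waldhausen framework---yields a fibration
\[
K\bNil(f) \to K(f[t]) \to K(f[t,1/t]).
\]
Passing to $\pi_*$ and substituting $K_*\bNil(f)\cong K_*(f)\oplus\Nil_*(f)$, $K_*(f[t])\cong K_*(f)\oplus NK_*(f)$, and Example~\ref{ex:f[t,1/t]} for $K_*(f[t,1/t])$, the connecting map $\partial_f:K_{n+1}(f[t,1/t])\to K_n\bNil(f)$ inherits the same block structure as its absolute counterparts, and the off-diagonal piece furnishes the desired isomorphism $NK_{n+1}(f)\cong\Nil_n(f)$.

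The main obstacle is verifying that the splittings in the relative long exact sequence are compatible with, and not merely abstractly isomorphic to, those in the absolute sequences for $A$ and $B$. All the splittings involved come from natural sections induced by the augmentation $A[t]\to A$ and the inclusion $\bP(A)\hookrightarrow\bNil(A)$ via $(P,0)$; one must check these sections pass through the homotopy-fiber construction so that the absolute splitting of $\partial_A$ and $\partial_B$ forces a parallel splitting of $\partial_f$, rather than allowing a mixing of the $NK_{n+1}(f)$-summand with neighboring factors. Once this compatibility is in hand, the argument reduces to the absolute case by a 5-lemma-style comparison.
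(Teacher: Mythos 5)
Your proposal is correct and follows essentially the same route as the paper: both form the ladder of absolute fibrations $K\bNil(-)\to K(-[t])\to K(-[t,1/t])$ (via the equivalence $\bNil\simeq\mathbf{H}_{1,t}$, resolution, and localization), take vertical homotopy fibers to get the relative fibration, and read off the isomorphism from the long exact sequence together with Example~\ref{ex:f[t,1/t]}. The splitting-compatibility point you flag is handled exactly as you suggest, since all sections come from ring maps natural in $A\to B$.
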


\begin{proof}
From the diagram of exact categories
\[\begin{CD}
\bNil(A) @>{\cong}>> {\bf H}_{1,t}(A[t]) @>>> 
{\bf H}(A[t]) @<{\cong}<< \bP(A[t]) @>>> \bP(A[t,1/t]) \\
@VVV           @.   @.        @VVV               @VVV  \\
\bNil(B) @>{\cong}>> {\bf H}_{1,t}(B[t]) @>>> 
{\bf H}(B[t]) @<{\cong}<< \bP(B[t]) @>>> \bP(B[t,1/t])
\end{CD}\]
we get a fibration sequence of $K$-theory spectra
\[\begin{CD}
K\bNil(A) @>>> K(A[t]) @>>> K(A[t,1/t])  \\
@VVV        @VV{f[t]^*}V    @VV{f[t,1/t]^*}V \\
K\bNil(B) @>>> K(B[t]) @>>> K(B[t,1/t]).
\end{CD}\]
Taking vertical fibers, 
we see that there is a long exact sequence
\[
K_{n+1}(f[t])\to K_{n+1}(f[t,1/t])\to K_n\bNil(f) \to 
K_n(f[t])    \to K_n(f[t,1/t])\to
\]
and (using Example \ref{ex:f[t,1/t]}) 
an isomorphism $\Nil_n(f) \cong NK_{n+1}(f)$.
\end{proof}

\begin{lemma}
For any ring homomorphism $f:A\to B$, 
$\Nil_*(f)$ is a graded $\End_*(A)$-module.
\end{lemma}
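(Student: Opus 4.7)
The plan is to produce the action via a biexact pairing at the category level, in analogy with the construction that makes $\Nil_*(A)$ an $\End_*(A)$-module in the absolute case. Tensor product over $A$ gives a biexact pairing
\[
\bEnd(A)\times \bNil(A) \to \bNil(A),\quad ((P,\alpha),(N,\nu))\mapsto (P\otimes_A N,\, \alpha\otimes\nu),
\]
and this pairing is natural in $A$: the square
\[
\begin{CD}
\bEnd(A)\times \bNil(A) @>{\otimes}>> \bNil(A) \\
@V{\mathrm{id}\times\bNil(f)}VV @VV{\bNil(f)}V \\
\bEnd(A)\times \bNil(B) @>>> \bNil(B)
\end{CD}
\]
commutes up to natural isomorphism, where the bottom row sends $((P,\alpha),(M,\mu))$ to $(f^*P\otimes_B M,\, f^*\alpha\otimes\mu)$. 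This compatible family of biexact pairings induces a homotopy-commutative square of $K$-theory spectra, and passing to vertical homotopy fibers (using that smashing with $K\bEnd(A)$ preserves fiber sequences) produces a pairing $K\bEnd(A)\wedge K\bNil(f)\to K\bNil(f)$, hence a graded $K_*\bEnd(A)$-module structure on $K_*\bNil(f)$.

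To extract the action of $\End_*(A)$ on the summand $\Nil_*(f)$, I would use the splittings $K_*\bEnd(A)=K_*(A)\oplus \End_*(A)$ and $K_*\bNil(f)=K_*(f)\oplus \Nil_*(f)$ coming from the exact forgetful functors $\bEnd\to\bP$ and $\bNil\to\bP$, $(P,\alpha)\mapsto P$ and $(N,\nu)\mapsto N$, together with their sections $P\mapsto(P,0)$. These forgetful functors are compatible with the tensor pairing, so the projection $K_*\bNil(f)\to K_*(f)$ is $K_*\bEnd(A)$-linear. For $\eta\in\End_*(A)=\ker(K_*\bEnd(A)\to K_*(A))$ and any $\mu\in K_*\bNil(f)$, the class $\eta\cdot\mu$ projects to zero in $K_*(f)$ and hence lies in $\Nil_*(f)$; thus $\End_*(A)$ acts on $\Nil_*(f)$.

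The main obstacle is the rigorous construction of the spectrum-level pairing and the verification of its associativity and unit axioms, which should follow from the corresponding facts for the absolute pairing $K\bEnd(A)\wedge K\bNil(A)\to K\bNil(A)$; however, making precise the relative version of Waldhausen's biexact pairing (via the $wS_\mathdot S_\mathdot$-construction applied to the exact functor $\bNil(f)$) requires careful setup. Once this technical framework is in place, the naturality square above and the splitting argument deliver the conclusion.
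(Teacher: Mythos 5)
Your proposal is correct and follows essentially the same route as the paper: the action comes from tensoring with $(P,\alpha)\in\bEnd(A)$, naturality in $B$ passes it to the relative theory, and associativity up to natural isomorphism gives the module axioms. The ``careful setup'' you defer is exactly what the paper supplies, namely the explicit pairing $\bEnd(A)\times S_\mathdot\bNil(f)\to S_\mathdot\bNil(f)$ on objects of the relative Waldhausen category $S_n\bNil(f)$, which avoids having to choose homotopies when passing to vertical fibers; your closing observation that $\End_*(A)=\ker(K_*\bEnd(A)\to K_*(A))$ carries $K_*\bNil(f)$ into $\ker(K_*\bNil(f)\to K_*(f))=\Nil_*(f)$ is a correct (and slightly more explicit) justification of the paper's final ``in particular.''
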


\begin{proof}
A typical object in the Waldhausen category $S_n\bNil(f)$ is a pair
\[
(\mu_*,\nu_*)=
((M_1,\mu_1)\cof\cdots(M_n,\mu_n),(N_0,\nu_0)\cof\cdots(N_n,\nu_n)).
\]
There is a pairing 
$\bEnd(A)\times S.\bNil(f) \to S.\bNil(f)$ 
of simplicial Waldhausen categories, sending 
$(P,\alpha)\times(\mu_*,\nu_*)$ to
\[(
(P\oo M_1,\alpha\oo\mu_1)\!\cof\!\cdots\!\cof\!(P\oo M_n,\alpha\oo\mu_n),
(P\oo N_0,\alpha\oo\nu_1)\!\cof\!\cdots\!\cof\!(P\oo N_n,\alpha\oo\nu_n)
).\]
It induces a pairing $K_*\bEnd(A)\oo K_*\bNil(f)\to K_*\bNil(f)$.
Since the tensor product $(\alpha\oo\beta)\oo\mu\cong\alpha\oo(\beta\oo\mu)$
is associative up to natural isomorphism, the two pairings 
$$
\bEnd(A)\times\bEnd(A)\times S.\bNil(f) \to S.\bNil(f)
$$
agree up to natural isomorphism, making 
$K_*\bNil(f)$ a graded $K_*\bEnd(A)$-module.
In particular, $\Nil_*(f)$ is a graded module over $\End_*(A)$.
\end{proof}

Recall that the ring $W(A)$ of big Witt vectors has underlying 
abelian group $(1+tA[[t]])^\times$\!. Almkvist's theorem \cite[II.7.4.3]{WK} 
states that $[P,\alpha]\mapsto\det(1-t\alpha)$ maps $\End_0(A)$ 
isomorphically onto the subring of $W(A)$ whose
underlying abelian group consists of all quotients $f(t)/g(t)$ of 
polynomials in $1+tA[t]$. The intersection of the ring
$\End_0(A)$ with the ideal $(1+t^mA[[t]])$ of $W(A)$
is the ideal $I_m=\{1+t^m(f/g)\}$ of $\End_0(A)$, and
$\End_0(A)/I_m\cong W(A)/(1+t^mA[[t]])$. In particular, $W(A)$ is
the completion of $\End_0(A)$ with respect to the $t$-adic filtration.

We say that an $\End_0(A)$-module $M$ is {\it continuous} if
for every $x\in M$ there is an $m$ so that $I_m\cdot x=0$.
Thus every continuous $\End_0(A)$-module $M$ is also continuous 
as a $W(A)$-module: for every $x\in M$ we have $(1+t^mA[[t]])\cdot x=0$
for some $m$.

The exact functors $F_n, V_n: \bNil(A)\to\bNil(A)$, defined 
by $F_{n}(P,\nu)=(P,\nu^{n})$ and $V_n(Q,\nu)=(Q[t]/(t^{n}-\nu),t)$,
commute with $\bNil(A)\to\bNil(B)$.  Hence they
induce exact endofunctors $F_n, V_n$ on $S.\bNil(f)$ by
$F_n(\mu_*,\nu_*)=(F_n(\mu_*),F_n(\nu_*))$ and
$V_n(\mu_*,\nu_*)=(V_n(\mu_*),V_n(\nu_*))$. 
For $a\in A$ and $n>0$, and $\nu$ in $\Nil_*(f)$, 
Almkvist's theorem associates $(1-at^n)$ to $V_n([A,a]-[A,0])$
and yields the product formula
\begin{equation}\label{W-action}
(1-at^n) * \nu = V_n([A,a]-[A,0])*\nu.
\end{equation}

Stienstra proved in \cite{Jan,Jan1} that the $\Nil_n(A)$ are continuous
$\End_0(A)$-modules, and hence $W(A)$-modules. The key step \cite[2.12]{Jan}
was showing that the projection formula holds:
\[
(V_{n}\alpha)*\nu= V_{n}(\alpha*F_{n}(\nu)) \quad\text{for}\quad
\alpha\in\End_0(A) \textrm{~and~} \nu\in\Nil_*(A).
\]
Here is the corresponding projection formula in the relative setting;
we will postpone its proof in order to get to the main result.

\begin{lemma}\label{projection}
For all $\alpha\in\End_0(A)$ and $\beta\in\Nil_{*}(f)$,
\[
(V_{n} \alpha)*\beta=V_{n}(\alpha* F_{n}(\beta)).
\]
\end{lemma}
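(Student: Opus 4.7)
The plan is to adapt Stienstra's absolute proof of the projection formula \cite[2.12]{Jan} to the relative setting by performing his construction entrywise in the Waldhausen category $S.\bNil(f)$ and then passing to $K$-theory. Recall that the $\End_0(A)$-action on $K_*\bNil(f)$ arises from the Waldhausen pairing $\bEnd(A)\times S.\bNil(f)\to S.\bNil(f)$ sending $(P,\alpha)\times(\mu_*,\nu_*)$ to the componentwise tensor product with endomorphism $\alpha\otimes(-)$, and the operators $F_n, V_n$ extend entrywise from $\bNil(A)$ and $\bNil(B)$ to $S.\bNil(f)$, commuting with this pairing.

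In the absolute case, Stienstra proves the projection formula $V_n(\alpha)\cdot\mu = V_n(\alpha\cdot F_n\mu)$ for $\alpha\in\End_0(A)$ and $\mu\in\Nil_*(A)$ by exhibiting a natural identification of $\bNil(A)$-objects
$$V_n(P,\alpha)\otimes_A(M,\mu) \;\equiv\; V_n\bigl((P,\alpha)\otimes_A F_n(M,\mu)\bigr),$$
either as a direct isomorphism or up to a short exact sequence in $\bNil(A)$. Both sides have underlying module $(P\otimes_A M)^n$, and the identification is a $\mu$-dependent change of basis that intertwines the companion-matrix endomorphism $C\otimes\mu$ on the left with the companion-matrix endomorphism of $\alpha\otimes\mu^n$ on the right; nilpotence of $\mu$ is what makes the computation collapse. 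The same construction, base-changed along $f:A\to B$, identifies the analogous tensor products in $\bNil(B)$.

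My proposed argument then proceeds as follows: apply Stienstra's identification to each entry $(M_i,\mu_i)$ and $(N_j,\nu_j)$ of a Waldhausen pair $(\mu_*,\nu_*)\in S_n\bNil(f)$. Naturality of the identification in its module argument guarantees compatibility with the cofibrations $(M_i,\mu_i)\cof(M_{i+1},\mu_{i+1})$ and $(N_j,\nu_j)\cof(N_{j+1},\nu_{j+1})$, and with the prescribed quotients $M_i/M_j$, $N_i/N_j$; compatibility with $f^*$ ensures that the $\bNil(A)$-level identification and the $\bNil(B)$-level identification glue into a single identification of $S_n\bNil(f)$-objects. Upon passage to $|wS.|$ and to homotopy groups this yields $(V_n\alpha)*\beta = V_n(\alpha*F_n\beta)$ in $K_*\bNil(f)$, and in particular in the summand $\Nil_*(f)$.

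The hard part will be verifying that Stienstra's object-level construction is sufficiently strict to extend to cofibrations and chosen quotients in the $S$-construction: although naturality formally gives compatibility, one must check that the explicit change-of-basis (or the witnessing short exact sequence) is compatible with the ordered choices of subquotients built into $S_n$, not merely up to coherent isomorphism. Once this strict naturality is confirmed, the projection formula transfers to $K_*\bNil(f)$ automatically, and the identification $\Nil_n(f)\cong NK_{n+1}(f)$ from Proposition~\ref{Nil=NK} shows it is exactly the statement of the lemma.
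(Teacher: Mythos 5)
Your proposal rests on the claim that Stienstra's absolute projection formula is witnessed by a natural identification of objects
\[
V_n(P,\alpha)\oo_A(M,\mu)\;\equiv\;V_n\bigl((P,\alpha)\oo_A F_n(M,\mu)\bigr)
\]
in $\bNil(A)$, which you would then apply entrywise in $S_\mathdot\bNil(f)$. No such object-level isomorphism exists in general: take $P=A$, $\alpha=0$ and $\mu=0$. The left side is $(M^n,0)$ while the right side is $V_n(M,0)=(M[t]/(t^n),t)$, i.e.\ $M^n$ with the shift endomorphism; these are not isomorphic in $\bNil(A)$ (the endomorphisms have different ranks). Nor is the discrepancy repaired by a single natural short exact sequence to which one could apply additivity. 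This is precisely why Stienstra's proof, which the paper follows, does not argue at the level of $\bNil$-objects at all. Instead it introduces the auxiliary category $\mathbf{E}=\bEnd(R;S_6)$ with $R=\Z[y_1,y_2]$ and $S_6$ generated by $x$ and $x^n-y_1^ny_2$, together with a \emph{multi-exact} pairing $\Theta:\mathbf{E}\times\bEnd(A)\times\bNil(B)\to\bNil(B)$. The two operations $\beta\mapsto(V_n\alpha)*\beta$ and $\beta\mapsto V_n(\alpha*F_n\beta)$ are $\Theta_*([R^n,\omega'],-)$ and $\Theta_*([R^n,\omega],-)$ for two explicit companion-type matrices $\omega,\omega'$, and the whole content of the formula is the identity $[R^n,\omega]=[R^n,\omega']$ in $K_0\mathbf{E}$ --- an equality of $K_0$-classes, not of objects, which then forces the induced operations on $K$-theory to coincide. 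Your proposal omits this key input entirely.

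By contrast, the step you flag as ``the hard part'' --- strict compatibility with the cofibrations and chosen quotients of the $S$-construction --- is not where the difficulty lies. Once the argument is phrased via $\Theta$, the relative upgrade is immediate: $\Theta$ is natural in $B$, so one may replace $\bNil(B)$ by $S_\mathdot\bNil(f)$ and obtain a product $K_0\mathbf{E}\oo\End_0(A)\oo\Nil_*(f)\to\Nil_*(f)$, through which the $K_0\mathbf{E}$-identity transports without any entrywise gluing. To repair your argument you would need either to import Stienstra's $K_0\mathbf{E}$ computation (at which point you have reproduced the paper's proof) or to exhibit an explicit natural chain of exact sequences in $\bNil$ interpolating between the two functors, which the counterexample above shows cannot be a single isomorphism and which is not supplied.
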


\begin{theorem}\label{NK}
 Let $f:A\to B$ be a ring map. Then the product \eqref{W-action} makes
$\Nil_{n}(f) \cong NK_{n+1}(f)$ into a continuous $W(A)$-module
for every integer $n$.
\end{theorem}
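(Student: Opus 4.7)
The plan is to follow Stienstra's strategy for the absolute case $\Nil_*(A)$, leveraging the projection formula (Lemma \ref{projection}) to upgrade the graded $\End_0(A)$-module structure of the preceding lemma into a continuous $W(A)$-module structure, and then invoking naturality of the pairing of $K$-theory spectra to handle negative degrees. The pairing $\bEnd(A)\times S.\bNil(f)\to S.\bNil(f)$ extends to the relevant $K$-theory spectra, so $\End_0(A)$ acts on $\Nil_n(f)\cong NK_{n+1}(f)$ for every integer $n$ (using Proposition \ref{Nil=NK} and Bass delooping in negative degrees), and formula \eqref{W-action} prescribes how $(1-at^m)\in W(A)$ should act, namely as multiplication by $V_m([A,a]-[A,0])\in\End_0(A)$.

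The crucial step is continuity of the $\End_0(A)$-action: for every $\beta\in\Nil_n(f)$, I need to exhibit an integer $N$ with $I_N\cdot\beta=0$. The projection formula reduces this to showing $F_j(\beta)=0$ for all $j\ge N$, because then $(V_j\alpha)*\beta=V_j(\alpha*F_j(\beta))=0$ for every $\alpha\in\End_0(A)$, and every element of $I_N$ is a sum of such products $V_j\alpha$ with $j\ge N$. A representative of $\beta$ in the Waldhausen model involves only finitely many nilpotent endomorphisms $(P_i,\nu_i)$ in $\bNil(A)$ and $(Q_k,\eta_k)$ in $\bNil(B)$, all of nilpotency order at most some $N$; then $F_j$ sends each to $(P_i,0)$, $(Q_k,0)$, which lies in the subcategory $\bP(A)\hookrightarrow\bNil(A)$ (resp.\ $\bP(B)\hookrightarrow\bNil(B)$) whose contribution to the summand $\Nil_n(f)\subset K_{n+1}\bNil(f)$ vanishes.

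Once continuity is in hand, the $W(A)$-module structure is forced: since $\End_0(A)\subset W(A)$ is dense in the $t$-adic topology with $\End_0(A)/I_m\cong W(A)/(1+t^mA[[t]])$, any $w\in W(A)$ acts on $\beta$ through its reduction in $\End_0(A)/I_N$, and the resulting action is automatically continuous, compatible with \eqref{W-action}, and natural in $f$. For negative $n$, Bass delooping identifies $\Nil_n(f)$ as a natural retract of $\Nil_{n+1}$ of a Laurent extension; since the pairing with $\bEnd(A)$ is natural in all variables, the continuous $W(A)$-module structure descends.

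The main obstacle is making the Waldhausen bookkeeping in the second paragraph rigorous: to assert that a class in $K_{n+1}\bNil(f)$ has a common nilpotency bound on all its representing data, I would filter $\bNil(A)=\bigcup_N\bNil_N(A)$ by the subcategory of pairs $(P,\nu)$ with $\nu^N=0$, similarly filter $\bNil(B)$ and the fiber defining $\bNil(f)$, and observe that $K\bNil(f)=\operatorname{hocolim}_N K\bNil_N(f)$. Then every class lifts to some $\bNil_N(f)$, on which $F_j$ for $j\ge N$ factors through the Karoubi filtration step where the Nil-content dies, and Stienstra's argument \cite{Jan,Jan1} applies verbatim in the relative setting.
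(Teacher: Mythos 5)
Your proposal follows essentially the same route as the paper: filter $\bNil(f)$ by the subcategories $\bNil^m$ of endomorphisms with $\nu^m=0$, note that $F_j$ kills this subcategory for $j\ge m$ (its image factors through $\bP$, whose contribution to the $\Nil$-summand vanishes), and then use the projection formula of Lemma \ref{projection} together with \eqref{W-action} to conclude that $(1-at^j)$ annihilates the image of $K_*\bNil^m(f)$, giving continuity of the $\End_0(A)$- and hence $W(A)$-module structure. The only difference is presentational: you spell out the passage to negative degrees and the density of $\End_0(A)$ in $W(A)$ slightly more explicitly than the paper does, but the argument is the same.
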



\begin{proof}
For each $m>0$, let $\bNil^m(A)$ denote the exact subcategory 
of all $(P,\nu)$ in $\bNil(A)$ such that $\nu^m=0$.
Thus we have relative groups $K_*\bNil^m(f)$ associated to
$K_*\bNil^m(A)\to K_*\bNil^m(B)$, and $K_*\bNil(f)$
is the direct limit of the $K_*\bNil^m(f)$.

Suppose that $n\ge m$.
Clearly, $F_n$ acts as zero on $\bNil^m(f)$. By the
projection formula \ref{projection}, $V_n(\alpha)$ acts as zero on the image
$\Nil^m_*(f)$ of $K_*\bNil^m(f)\smap{} K_*\bNil(f)\to\Nil_*(f)$. 
By \eqref{W-action}, $(1- at^n)$ acts as zero on
$\Nil^m_{*}(f)$. Since $\Nil_{*}(f)$ is the union of the
$\Nil^m_*(f)$, for any $\beta\in \Nil_*(f)$ there is an $m$ such
that $(1-at^n)\cdot\beta=0$ for all $n\ge m$ and $a\in A$. This shows that
$\Nil_*(f)$ is a continuous $\End_0(A)$-module, and hence
a continuous $W(A)$-module.
\end{proof}

\begin{proof}[Proof of Lemma \ref{projection}]
Following Stienstra \cite[\S6]{Jan}, 
set $R=\Z[y_1,y_2]$, and set $\mathbf{E}=\bEnd(R;S_6)$, where 
$S_6$ is the multiplicative subset of $R[x]$ generated by $x$ 
and $x^n-y_1^ny_2$. 
As pointed out in {\it loc.\,cit.}, there is a multi-exact pairing
\[
\Theta: \mathbf{E} \times \bEnd(A)\times\bNil(B) \to \bNil(B)
\]
sending $(E,\omega)$, $(P,\alpha)$ and $(N,\nu)$ to 
$(E\oo_R(P\oo_AN),\omega\oo1)$, where $P\oo_AN$ is ragarded as an
$R$-module by letting $y_1$ and $y_2$ act as $\alpha\oo1$ and $1\oo\nu$.
As this pairing is natural in $B$, we may replace $\bNil(B)$ by $S.\bNil(f)$.
This yields (among other things) a product
\[
\Theta_*: K_0\mathbf{E} \oo \End_0(A)\oo \Nil_*(f) \to \Nil_*(f).
\]
Stienstra proves in {\it loc.\,cit.}\ 
that the elements $[R^n,\omega]$ and $[R^n,\omega']$ agree in 
$K_0 \mathbf{E}$, where
\[
\omega = \begin{pmatrix}0 &&& y_1^ny_2\\1 &&&0\\&\ddots&&\vdots\\0&&1&0
\end{pmatrix} \quad\textrm{and}\quad
\omega' = \begin{pmatrix}0 &&& y_1y_2\\y_1&&&0\\&\ddots&&\vdots\\0&&y_1&0
\end{pmatrix}.\]
Therefore the two maps 
\[\Theta_*([R^n,\omega],-),\Theta_*([R^n,\omega'],-):
\End_0(A)\oo\Nil_*(f) \to \Nil_*(f)
\] 
agree. Stienstra also observes that these maps send 
$[P,\alpha]\oo\beta$ to $V_n(\alpha*F_n\beta)$ and $(V_n\alpha)*\beta$,
respectively; see also \cite[p.14]{Jan1}. The projection formula follows.
\end{proof}


\section{Negative relative K-theory}

Let $f:X\to S$ be a morphism of schemes. Then we have a long exact
sequence of negative K-groups, part of which is:
\begin{equation}\label{kgroup}
\cdots\to K_{-d}(f)\to K_{-d}(S)\to K_{-d}(X)\to K_{-d-1}(f)\to K_{-d-1}(S)\to\cdots.
\end{equation}


\begin{theorem}\label{vanishing}
Let $f\!:X\!\to\!S$ be a morphism of $d$-dimensional schemes,
essentially of finite type over a field $k$ of characteristic $0$.
Then for all $r>0$:
\begin{enumerate}
\item $K_{n}(f)=K_n(f\times\A^r)=0$ for $n\le-d-2$.
\item $K_{-d-1}(f)\cong K_{-d-1}(f\times\A^r)$
(``$f$ is $K_{-d-1}$-regular.'') 

\item If $f$ is a finite map then 
$K_{-d-1}(f)\cong H_\cdh^{d}(S, f_{*}\Z/\Z)$.
\end{enumerate}
\end{theorem}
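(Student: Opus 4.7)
The strategy is to combine the long exact sequence \eqref{kgroup} with the main theorems of \cite{CHSW}, which I will use in the following form: for any $d$-dimensional scheme $Y$ essentially of finite type over a field of characteristic zero, $K_n(Y)=0$ for $n<-d$, the scheme $Y$ is $K_{-d}$-regular (and hence $K_n$-regular for every $n\le -d$), $K_{-d}(Y)\cong H^d_\cdh(Y,\Z)$, and $H^q_\cdh(Y,\Z)=0$ for $q>d$.

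For (1), apply these vanishing statements to $S$ and $X$, and via $K_{-d}$-regularity also to $S\times\A^r$ and $X\times\A^r$; all the relevant $K_n$ vanish for $n\le -d-1$. Plugging this into \eqref{kgroup} forces $K_n(f)=K_n(f\times\A^r)=0$ for $n\le -d-2$. For (2), the same long exact sequence, truncated using $K_{-d-1}(S)=K_{-d-1}(X)=0$, gives
\[
K_{-d-1}(f)\cong\coker\bigl(K_{-d}(S)\to K_{-d}(X)\bigr).
\]
Because $S$ and $X$ are $K_{-d}$-regular, this cokernel is unchanged upon replacing them by $S\times\A^r$ and $X\times\A^r$, which is exactly $K_{-d-1}$-regularity of $f$.

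For (3), assume $f$ is finite and identify $K_{-d}(S)\cong H^d_\cdh(S,\Z)$ and $K_{-d}(X)\cong H^d_\cdh(X,\Z)$ via \cite{CHSW}. Since $f$ is finite, the higher cdh direct images $R^qf_*\Z$ vanish for $q>0$, so $H^d_\cdh(X,\Z)\cong H^d_\cdh(S,f_*\Z)$. The short exact sequence
\[
0\to\Z\to f_*\Z\to f_*\Z/\Z\to 0
\]
of cdh sheaves on $S$ produces an exact piece
\[
H^d_\cdh(S,\Z)\to H^d_\cdh(S,f_*\Z)\to H^d_\cdh(S,f_*\Z/\Z)\to H^{d+1}_\cdh(S,\Z),
\]
and the final group vanishes by the cohomological dimension bound. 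Combined with the cokernel description from (2), this yields $K_{-d-1}(f)\cong H^d_\cdh(S,f_*\Z/\Z)$.

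The main obstacle is the cdh-acyclicity of finite morphisms, i.e., the vanishing $R^qf_*\Z=0$ for $q>0$ when $f$ is finite; this is what makes cdh-pushforward exact and underpins the identification $H^*_\cdh(X,\Z)\cong H^*_\cdh(S,f_*\Z)$ used above. I expect it to follow from the fact that finite morphisms are built up from closed immersions via abstract blow-up squares that generate the cdh topology, reducing the computation to the standard behavior of the constant sheaf along closed immersions; the cohomological dimension bound $H^{>d}_\cdh(S,\Z)=0$ is then inherited from \cite{CHSW}.
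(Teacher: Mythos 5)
Your proposal is correct and follows essentially the same route as the paper: both deduce (1) and (2) from the vanishing, $K_{-d}$-regularity, and $K_{-d}\cong H^d_\cdh(-,\Z)$ statements of \cite{CHSW} fed into the long exact sequence \eqref{kgroup}, and both obtain (3) from the cdh (co)fiber sequence $\Z\to f_*\Z\to f_*\Z/\Z$ together with the cohomological dimension bound and the identification $H^*_\cdh(S,f_*\Z)\cong H^*_\cdh(X,\Z)$ for finite $f$. The one step you flag as an obstacle, the exactness of $f_*$ on cdh sheaves for finite $f$, is exactly the fact the paper also invokes without proof, so there is no substantive difference.
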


\begin{proof}
By Corollary 5.9 and Theorem 6.2 of \cite{CHSW}, 
$K_n(S)\cong K_n(S\times\A^r)$ for all $n\le -d$,
$K_{n}(S)=0$ for $n<-d$ and $K_{-d}(S)\cong H_\cdh^{d}(S,\Z)$; 
the analogous assertions hold for $X$.
The exact sequence \eqref{kgroup} for $S$ and $S\times\A^r$
implies the first two assertions.
For (3), we have a distinguished triangle cdh sheaves on $S$,
\[
\Z\to f_{*}\Z\to f_{*}\Z/\Z\to \Z[1].
\]
Since the cdh-cohomological dimension of $S$ is at most $d$, 
$H_\cdh^{d+1}(S,\Z)=0.$ Thus the long exact sequence on 
cdh-cohomology ends in
\begin{equation*}
\to H_\cdh^{d}(S, \Z)\to H_\cdh^{d}(S, f_{*}\Z)\to 
H_\cdh^{d}(S, f_{*}\Z/\Z)\to 0.
\end{equation*}
Since $f$ is finite, we have 
$H_\cdh^*(S,f_*\Z)\map{\cong}H_\cdh^*(X,\Z)$;
assertion (3) follows.
\end{proof}

\begin{subrem}
Let $k$ be a perfect field of characteristic $p$.
Kerz and Strunk have shown in \cite{KS} that $K_n(S)$
is a $p$-primary torsion group for $n<-d$.
Then Theorem \ref{vanishing} holds for $k$ up to $p$-torsion.

If in addition $k$ is a perfect field, over which
weak resolution of singularities holds, then 
Theorem \ref{vanishing}(1,2) holds for $k$.
This also follows from \cite{KS}; if strong resolution of 
singularities holds, (1) also follows from the Geisser--Hesselholt
theorem in \cite{GH} that $K_n(S)=0$ for $n<-d$.
\end{subrem}

When $S$ is a curve, not necessarily defined over $\mathbb Q$,
we have a similar result.

\begin{theorem}\label{1dim}
Let $f:X\to S$ be a finite map of 1-dimensional noetherian schemes.
Then $K_{-1}(f)$ fits into an exact sequence
\[
0 \to H_\nis^1(S,f_*\cO_X^\times/\cO_S^\times) \to K_{-1}(f) \to 
H_\nis^0(S,f_*\Z/\Z) \to 0.
\]
In addition, $K_{-2}(f)\cong H_\nis^{1}(S, f_{*}\Z/\Z)$ and $K_n(f)=0$ for $n<-2$.
\end{theorem}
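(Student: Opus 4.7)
The plan is to combine the long exact sequence \eqref{kgroup} with known vanishing results for negative $K$-theory of one-dimensional noetherian schemes, using a Nisnevich descent argument to handle $K_{-1}(f)$. The key inputs are $K_n(S)=K_n(X)=0$ for $n\le -2$ and $K_{-1}(S)\cong H_\nis^1(S,\Z)$ (proved for the regular case by Weibel and extended to full noetherian generality by Kerz-Strunk), together with the fact that $f$ finite forces $R^q f_*=0$ for $q>0$, so $K_{-1}(X)\cong H_\nis^1(S,f_*\Z)$.

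Substituting into \eqref{kgroup} immediately gives $K_n(f)=0$ for $n\le -3$ and $K_{-2}(f)\cong \coker(K_{-1}(S)\to K_{-1}(X))$. The short exact sequence of Nisnevich sheaves $0\to\Z\to f_*\Z\to f_*\Z/\Z\to 0$, combined with the vanishing $H_\nis^2(S,-)=0$ (since 1-dimensional noetherian schemes have Nisnevich cohomological dimension at most $1$), identifies this cokernel with $H_\nis^1(S,f_*\Z/\Z)$, yielding the formula for $K_{-2}(f)$.

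For $K_{-1}(f)$, I would apply the Thomason-Trobaugh Nisnevich descent spectral sequence to the presheaf of spectra $\mathcal K(f)\colon U\mapsto K(f|_U)$ on $S_\nis$, defined as the fiber of $\mathcal K(-)\to f_*\mathcal K(-)$. Writing $\mathcal K_n(f)$ for the Nisnevich sheafification of $\pi_n\mathcal K(f)$, the spectral sequence $E_2^{p,q}=H_\nis^p(S,\mathcal K_{-q}(f))\Rightarrow K_{-p-q}(f)$ degenerates (since $S$ has Nisnevich cohomological dimension at most $1$) into short exact sequences; for $n=-1$ this reads
\[
0\to H_\nis^1(S,\mathcal K_0(f))\to K_{-1}(f)\to H_\nis^0(S,\mathcal K_{-1}(f))\to 0.
\]

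The core computation is the identification $\mathcal K_0(f)\cong f_*\cO_X^\times/\cO_S^\times$ and $\mathcal K_{-1}(f)\cong f_*\Z/\Z$, done on Nisnevich stalks. At a henselian local $A_h$ of $S$ with $B_h=B\otimes_A A_h$ (a finite, hence semilocal henselian, $A_h$-algebra), one has $K_0(A_h)=\Z$, $K_0(B_h)=\Z^k$ via the diagonal (where $k$ is the number of components of $\Spec B_h$), $K_1=(-)^\times$ for both (since $SK_1$ vanishes for semilocal rings), and $K_{-1}(A_h)=K_{-1}(B_h)=0$. The long exact sequence for the restriction $f_h$ then yields $K_0(f_h)\cong B_h^\times/A_h^\times$ and $K_{-1}(f_h)\cong\Z^k/\Z$, matching the stalks of $f_*\cO_X^\times/\cO_S^\times$ and $f_*\Z/\Z$ respectively. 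The main obstacle will be rigorously justifying these stalk computations and the applicability of Thomason-Trobaugh descent to relative non-connective $K$-theory in the noetherian setting; both should hold but demand careful checking.
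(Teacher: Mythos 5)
Your proposal follows essentially the same route as the paper: the Thomason--Trobaugh Nisnevich descent spectral sequence $E_2^{p,q}=H_\nis^p(S,\cK_{-q}(f))\Rightarrow K_{-p-q}(f)$, degeneration from $cd_\nis(S)\le1$, and the stalkwise identification $\cK_0(f)\cong f_*\cO_X^\times/\cO_S^\times$, $\cK_{-1}(f)\cong f_*\Z/\Z$, $\cK_n(f)=0$ for $n\le-2$ at henselian local rings via Drinfeld's vanishing of $K_{-1}$ and the vanishing of $K_n$ for $n<-1$ (the paper's Lemma \ref{hensel-1dim}). The only cosmetic difference is that the paper reads $K_{-2}(f)$ and the vanishing for $n<-2$ directly off the same spectral sequence rather than via the global sequence \eqref{kgroup}.
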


\begin{proof}
By Thomason-Trobaugh \cite[10.8]{TT}, 
we have a spectral sequence 
\[
E_{2}^{p,q}= H_\nis^{p}(S, \cK_{-q}(f))\Rightarrow K_{-p-q}(f),
\]
where $\cK_{n}(f)$ is the Nisnevich sheafification of the
presheaf $U\mapsto K_{n}(U, f^{-1}U)$. Each stalk $\cK_{n}(f)$ is 
$K_{n}(A, B)$, where $A$ is a hensel local ring of dimension $\le1$. 
By Lemma \ref{hensel-1dim}, we have
\[
\cK_{n}(f)= \begin{cases} 0 & {\rm if}~ n\leq -2 \\
                      f_{*}\Z/\Z & {\rm if}~ n=-1\\
                      f_{*}\cO_X^\times/\cO_S^\times & {\rm if}~ n=0.
  \end{cases}
\]
Since $cd_\nis(S)\le1$, $E_{2}^{p,q}\neq 0$ only for $p=0, 1$ and $q\le1$.
Thus the spectral sequence degenerates to yield
$K_{-2}(f)\cong H_\nis^1(S,f_*\Z/Z)$ and $K_n(f)=0$ for $n<-2$.
\end{proof}

\begin{lemma}\label{hensel-1dim}
Let $A$ be a 1-dimensional hensel local ring and $f:A\hookrightarrow B$
a finite extension. If $B$ has $r$ components, then
\[
K_0(f)\cong B^\times/A^\times, \quad K_{-1}(f)\cong \Z^{r-1} \quad
\textrm{and\quad $K_n(f)=0$ for $n<-1$.}
\]
\end{lemma}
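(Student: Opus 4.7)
The plan is to combine standard facts about the $K$-theory of local rings with the extension of the long exact sequence \eqref{seq:K0} to negative degrees.

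First, since $A$ is hensel local and $f$ is finite, the standard structure theorem for finite algebras over a hensel local ring decomposes $B$ as a product $B=B_1\times\cdots\times B_r$ of (hensel) local rings, so $K_0(A)=\Z$ and $K_0(B)=\bigoplus_i\Z[B_i]=\Z^r$, with the base-change map sending $[A]\mapsto[B]=\sum[B_i]$; that is, it is the diagonal inclusion $\Z\hookrightarrow\Z^r$. Because $SK_1$ vanishes on every commutative local ring (Whitehead), $K_1(A)=A^\times$ and $K_1(B)=\prod B_i^\times=B^\times$, and the functoriality map is simply the inclusion $A^\times\subseteq B^\times$.

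Next I would verify $K_{-1}(A)=K_{-1}(B)=0$ and $K_n(A)=K_n(B)=0$ for all $n<-1$. The vanishing in degrees $n<-1$ is Bass--Weibel vanishing for noetherian rings of dimension one (Kerz--Strunk). For $K_{-1}$, I would invoke Weibel's formula $K_{-1}(R)\cong H^1_\zar(\Spec R,\Z)$ for $R$ a 1-dimensional noetherian ring, and observe that on the spectrum of any local ring the only open set containing the closed point is the whole spectrum, so the global section functor is exact and all positive Zariski cohomology vanishes. The same argument applies componentwise to $\Spec B=\coprod\Spec B_i$, giving $K_{-1}(B)=0$.

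Finally, plugging these inputs into the long exact sequence
\[
K_1(A)\to K_1(B)\to K_0(f)\to K_0(A)\to K_0(B)\to K_{-1}(f)\to K_{-1}(A)\to K_{-1}(B)\to K_{-2}(f)\to\cdots
\]
yields, since both $A^\times\hookrightarrow B^\times$ and the diagonal $\Z\hookrightarrow\Z^r$ are injective, the identifications $K_0(f)\cong B^\times/A^\times$ and $K_{-1}(f)\cong\Z^r/\Z\cong\Z^{r-1}$; the vanishing of $K_n(A)$ and $K_n(B)$ for $n<-1$ then forces $K_n(f)=0$ there. The main obstacle is justifying $K_{-1}(A)=0$ rigorously: this relies on Weibel's cohomological description of negative $K$-theory in dimension one, combined with the elementary observation that Zariski cohomology vanishes in positive degrees on the spectrum of any local ring.
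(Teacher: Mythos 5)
Your overall strategy is the same as the paper's: decompose $B$ as a product of $r$ hensel local rings, feed the known $K$-groups of $A$ and $B$ into the long exact sequence \eqref{kgroup}, and read off $K_0(f)\cong B^\times/A^\times$, $K_{-1}(f)\cong\Z^{r-1}$ and the vanishing below degree $-1$. The bookkeeping with $K_1=(\;)^\times$ for (products of) local rings, $K_0(B)=\Z^r$ and the diagonal map $\Z\to\Z^r$ is correct.

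The gap is in your justification of $K_{-1}(A)=K_{-1}(B)=0$, which you yourself flag as the main obstacle. The formula $K_{-1}(R)\cong H^1_\zar(\Spec R,\Z)$ is false for general 1-dimensional noetherian rings: for the coordinate ring $R=k[x,y]/(y^2-x^3-x^2)$ of the nodal cubic, $\Spec R$ is irreducible, so the constant sheaf $\Z$ is flasque and $H^1_\zar(\Spec R,\Z)=0$, yet $K_{-1}(R)\cong L\Pic(R)\cong\Z$. The correct cohomological statement in dimension one lives in the Nisnevich (or \'etale) topology, $K_{-1}(R)\cong H^1_\nis(\Spec R,\Z)$ --- which is exactly why the descent spectral sequence used to prove Theorem \ref{1dim} is a Nisnevich one. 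With that corrected formula your argument does close, since a hensel local ring is a point for the Nisnevich topology; alternatively (and this is what the paper does) one quotes Drinfeld's theorem \cite[III.4.4.3]{WK} that $K_{-1}$ of a hensel local ring vanishes, with no dimension hypothesis. A minor further remark: for the vanishing of $K_n(A)$ and $K_n(B)$ in degrees $n<-1$ you do not need Kerz--Strunk (which concerns $KH$ and would require extra input to transfer to $K$); the classical reference for 1-dimensional noetherian rings is \cite[2.8]{W-AI}, which is what the paper cites.
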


\begin{proof}
Since $B$ is a finite $A$-algebra, $B$ is a finite
product of $r$ hensel local rings.
By \cite[2.8]{W-AI}, $K_n(A)=K_n(B)=0$ for $n<-1$.
By a result of Drinfeld \cite[III.4.4.3]{WK}, we have
$K_{-1}(A)=K_{-1}(B)=0$. The result now follows from \eqref{kgroup}.
\end{proof}


\begin{remark}
A necessary condition for $K_{-1}(f)=0$ is that 
the ring extension $f:A\hookrightarrow B$ is {\it anodal}, i.e.,
if every $b\in B$ such that $(b^{2}- b)\in A$ and 
$(b^{3}- b^{2})\in A$ belongs to $A$. (See \cite[3.1]{wei}.)
This is because \eqref{eq:det} induces a surjection 
$L\det:K_{-1}(f)\to L\Pic(f)$, and we showed in \cite{SW} that 
$L\Pic(f)=0$ implies that $A\subset B$ is anodal.
The converse does not hold, even if $f$ is a birational extension of 
domains, as Example 3.5 in \cite{wei} shows.
\end{remark}

\begin{example}
Here is an example to show why we assume $S$ affine in 
Proposition \ref{subint}.
For each $n$, the scheme $S=\mathbb{P}^1_k$ has a sheaf of algebras 
$\cO_B=\cO_S\oplus\cO(n)$ with $\cO(n)$ a square-zero ideal; fix $n\le-2$
and set $X=\Spec(\cO_B)$. Then  $H=H^1(\mathbb{P}^1,\cO(n))$ is nonzero
and $\Pic(X)=\Pic(S)\oplus H$, $K_0(X)\cong K_0(S)\oplus H$.
In particular, $K_{-1}(f)=H\ne0$.
\end{example}

\medskip
\textbf{Acknowledgements}: 
This project was initiated while the first author was visiting Rutgers 
University in August 2015; he would like to thank the Math Dept.\ of
Rutgers University for the invitation and financial support.
The first author is also grateful to Jan Stienstra
for sending him the manuscript \cite{Jan}.  The second author would like to
thank TIFR for providing a great environment for doing this research.

\end{document}